\newcommand{\R}{{\mathbb R}}
\newcommand{\e}{\varepsilon}
\newcommand{\la}{\langle}
\newcommand{\ra}{\rangle}
\def\vp{\varphi}
\newcommand\uprule{\rule{0mm}{1.9ex}}
\newtheorem{Lm}{Lemma}
\newtheorem{Th}{Theorem}
\newtheorem{Cor}{Corollary}
\theoremstyle{remark}
\theoremstyle{definition}
\newtheorem{Def}{Definition}
\title{Adaptive Similar Triangles Method: a Stable Alternative to Sinkhorn’s Algorithm for Regularized Optimal Transport} 
\author{Pavel Dvurechensky
				\thanks{Weierstrass Institute for Applied Analysis and Stochastics, Berlin; Institute for Information Transmission Problems RAS, 
								Moscow, pavel.dvurechensky@wias-berlin.de}
				\and
				Alexander Gasnikov
				\thanks{Moscow Institute of Physics and Technology, Moscow; Institute for Information Transmission Problems RAS, 
								Moscow, gasnikov@yandex.ru}
				\and				
				Sergey Omelchenko
				\thanks{Moscow Institute of Physics and Technology, Moscow, sergey.omelchenko@phystech.edu}
				\and
				Alexander Tiurin
				\thanks{National Research University Higher School of Economics, Moscow, alexandertiurin@gmail.com}
				}
\date{\today}
\begin{document}

\maketitle

\begin{abstract}
In this paper, we are motivated by two important applications: entropy-regularized optimal transport problem and road or IP traffic demand matrix estimation by entropy model. Both of them include solving a special type of optimization problem with linear equality constraints and objective given as a sum of an entropy regularizer and a linear function. It is known that the state-of-the-art  solvers for this problem, which are based on Sinkhorn's method (also known as RSA or balancing method), can fail to work, when the entropy-regularization parameter is small. We consider the above optimization problem as a particular instance of a general strongly convex optimization problem with linear constraints. We propose a new algorithm to solve this general class of problems. Our approach is based on the transition to the dual problem. First, we introduce a new accelerated gradient method with adaptive choice of gradient's Lipschitz constant. Then, we apply this method to the dual problem and show, how to reconstruct an approximate solution to the primal problem with provable convergence rate. We prove the rate $O(1/k^2)$, $k$ being the iteration counter, both for the absolute value of the primal objective residual and constraints infeasibility. Our method has similar to Sinkhorn's method complexity of each iteration, but is faster and more stable numerically, when the regularization parameter is small. We illustrate the advantage of our method by numerical experiments for the two mentioned applications. We show that there exists a threshold, such that, when the regularization parameter is smaller than this threshold, our method outperforms the Sinkhorn's method in terms of computation time.
\end{abstract}
\textbf{Keywords:} smooth convex optimization, linear constraints, first-order methods, accelerated gradient descent, algorithm complexity, entropy-linear programming, dual problem, primal-dual method, Sinkhorn's fixed point algorithm, entropy-regularized optimal transport, traffic demand matrix estimation

\noindent \textbf{AMS Classification:} 90C25
, 90C30
, 90C06
, 90C90.

\section*{Introduction}
\label{S:Intro}
The main problem, we consider, is convex optimization problem of the following form
\begin{equation}
(P_1) \quad \quad \min_{x\in Q \subseteq E} \left\{ f(x) : A_1x =b_1, A_2x - b_2 \in -K \right\},
\notag
\end{equation}
where $E$ is a finite-dimensional real vector space, $Q$ is a simple closed convex set, $A_1$, $A_2$ are given linear operators from $E$ to some finite-dimensional real vector spaces $H_1$ and $H_2$ respectively, $b_1 \in H_1$, $b_2 \in H_2$ are given, $K \subseteq H_2$ is some cone, $f(x)$ is a $\gamma$-strongly convex function  on $Q$ with respect to some chosen norm $\|\cdot\|_E$ on $E$. The last means that, for any $x,y \in Q$, $f(y) \geq f(x) + \la \nabla f(x) , y-x \ra + \frac{\gamma}{2}\|x-y\|^2_E$, where $\nabla f(x)$ is any subgradient of $f(x)$ at $x$ and hence is an element of the dual space $E^*$. Also we denote the value of a linear function $\lambda \in E^*$ at $x\in E$ by $\la \lambda, x \ra$.

We are motivated to consider the described class of problems by two particular applications. The first one comes from transportation research and consists in recovering a matrix of traffic demands between city districts from the information on population and workplace capacities of each district. As it is shown in \cite{gasnikov2016evolution}, a natural model of districts' population dynamics leads to an entropy-linear programming optimization problem for the traffic demand matrix estimation. In this case, the objective function in $(P_1)$ is a sum of an entropy function and a linear function, see the formal problem statement in Subsection \ref{S:Exmpl}. It is important to note also that the entropy function is multiplied by a regularization parameter $\gamma$ and the model is close to reality, when the regularization parameter is small. The same approach is used in IP traffic matrix estimation \cite{zhang2005estimating}.

The second application is the calculation of regularized optimal transport (ROT) between two probability measures introduced in \cite{cuturi2013sinkhorn}. The idea is to regularize the objective function in the classical optimal transport linear programming problem \cite{kantorovich1942translocation} by entropy of the transportation plan, see the formal problem statement in Subsection \ref{S:Exmpl}. This leads to the same type of problem with a regularization parameter as in the traffic demands matrix estimation. 
As it is argued in \cite{cuturi2016smoothed}, for the case of discretization of continuous probability measures, entropy regularization allows to obtain a better approximation for the optimal transportation plan than the solution of the original linear programming problem. A the same time, the regularization parameter $\gamma$ should be small. Otherwise, the solution of the regularized optimal transport problem will be a bad approximation for the original optimal transport problem.
To sum up, in both applications, it is important to solve regularized problems with \emph{small regularization parameter}.

The problem statement $(P_1)$ covers many other applications besides mentioned above. For example, general entropy-linear programming (ELP) problem \cite{fang1997entropy} arises in econometrics \cite{golan1996maximum}, modeling in science and engineering \cite{kapur1989maximum}. 
Such machine learning approaches as ridge regression \cite{hastie2001elements} and elastic net \cite{zou2005regularization} lead to the same type of problem.

\subsection*{Related Work}

\textbf{Sinkhorn's, RSA or balancing type methods.}
Special types of Problem $(P_1)$, such as traffic matrix estimation and regularized optimal transport, have efficient matrix-scaling-based solvers such as balancing algorithm, \cite{bregman1967proof}, Sinkhorn's method, \cite{sinkhorn1974diagonal,cuturi2013sinkhorn}, RAS algorithm \cite{kalantari1993rate}. Strong points of these algorithms are fast convergence in practice and easy parallel implementation. At the same time, these algorithms are suitable only for Problem $(P_1)$ with special type of linear equality constraints. A generalization for a problem with a special type of linear inequalities constraints was suggested in \cite{benamou2015iterative}, but without convergence rate estimates. Recently, \cite{chizat2016scaling} extended the approach of \cite{cuturi2013sinkhorn} for other special classes of entropy-minimization problems. 

The problem of instability of the matrix-scaling approach for problems with small regularization parameter was addressed in \cite{schmitzer2016stabilized}, but the proposed techniques are less suitable for parallel computations than the initial algorithm. Besides instability issue, the theoretical analysis of this approach is insufficient. There is a proof of linear convergence of the Sinkhorn's method \cite{franklin1989scaling}, but the theoretical bound is much worse than the rate in practice and theoretical rate is obtained in terms of convergence in a special metric, which is hard to interpret. 
Another weak point of matrix-scaling-based approach is essential dual nature of the algorithms, which makes it hard to obtain convergence rate for the primal variable and control the accuracy of the obtained approximation for the optimal transport plan or traffic matrix. 

An alternative matrix scaling algorithm was proposed in \cite{allen2017much} together with theoretical analysis, but this method seems to be hard to implement in practice and no experimental results were reported. A Sinkhorn's-algorithm-based approach to solve the linear-programming optimal transport problem is suggested in \cite{altschuler2017near-linear} with promising theoretical bounds and practical implementability. But their approach requires to take small regularization parameter for the bounds to hold, making Sinkhorn's algorithm unstable. 

In any case, all the mentioned algorithms are designed for a special instance of Problem $(P_1)$.

\textbf{First-order methods for constrained problems.}
We consider Problem $(P_1)$ in large-scale setting, when the natural choice is some first-order method. Due to the presence of linear constraints, the applicability of projected-gradient-type methods to the primal problem is limited. Thus, the most common approach involves construction of the dual problem and primal-dual updates during the algorithm progress. There are many algorithms of this type like ADMM \cite{boyd2011distributed,goldstein2014fast} and other primal-dual methods \cite{chambolle2011first-order,beck2014fast}, see the extensive review in \cite{tran-dinh2014constrained}. As it is pointed in \cite{tran-dinh2014constrained}, these methods have the following drawbacks. They assume that the proximal operator for the function $f$ is available and make some additional assumptions. These methods don't have appropriate convergence rate characterization: if any, the rates are non-optimal and are either only for the dual problem or for some weighted sum of primal objective residual and linear constraints infeasibility. \cite{tran-dinh2014constrained} themselves develop a good alternative, based only on the assumption of proximal tractability of the function $f$, but only for problems with linear equality constraints. This approach was further developed in \cite{yurtsever2015universal} for more general types of constraints. The key feature of the algorithm developed there is its adaptivity to the unknown level of smoothness in the dual problem. Nevertheless, the provided stopping criterion, which is based on the prescribed number of iterations, requires to know all the smoothness parameters. \cite{tran-dinh2015smooth} propose algorithms with optimal rates of convergence for a more general class of problems, but, for the case of strongly convex $f$, they assume that it is strongly convex with respect to a Euclidean-type norm. Thus, their approach is not applicable to entropy minimization problems, which are our main focus. 

An advanced ADMM with provable convergence rate with appropriate convergence characterization was proposed in \cite{ouyang2015accelerated}, but only for the case of equality constraints and Lipschitz-smooth $f$, which does not cover the case of entropy minimization. A general primal-dual framework for unconstrained problems was proposed in \cite{dunner2016primal-dual}, but it is not applicable in our setting. An adaptive to unknown Lipschitz constant algorithm for primal-dual problems was developed in \cite{malitsky2016first-order}, but the authors work with a different from our problem statement and the case of strongly convex objective is considered only in Euclidean setting, which also does not cover the case of entropy minimization.

Several recent algorithms \cite{patrascu2015rate,gasnikov2016efficient,chernov2016fast,li2016inexact} are based on the application of accelerated gradient method \cite{nesterov2004introduction,nesterov2005smooth} to the dual problem and have optimal rates. At the same time, these works do not consider general types of constraints as in Problem $(P_1)$. Also the proposed algorithms use, as an input parameter, an estimate of the Lipschitz constant of the gradient in the dual problem, which can be very pessimistic and lead to slow convergence. 
%
%

\subsection*{Our approach and contributions	}
Our approach is based on the transition to the dual problem for $(P_1)$. Since $f$ is strongly convex, the objective in the dual problem has Lipschitz-continuous gradient and the Lipschitz constant can be estimated. The negative points of this approach are that the estimate for the Lipschitz constant can be very pessimistic and that the feasible set of the dual problem is unbounded. 

We develop a new accelerated gradient method which is interesting by itself. This method uses line-search idea of \cite{nesterov2006cubic} to adapt to the Lipschitz constant of the objective's gradient and, hence, can use the local smoothness information to make larger steps compared to the standard accelerated gradient method \cite{nesterov2004introduction}. Also our method uses general proximal setup and, thus, can be adopted to the geometry of the problem at hand. Another good point is that it uses only one proximal step as opposed to, for, example \cite{nesterov2005smooth}.

We apply our method to the dual problem and supply it with a procedure to reconstruct the approximate solution of the primal problem. Despite the unboundedness of the dual feasible set, we prove convergence rates for primal objective residual, dual objective residual and constraints infeasibility.

Finally, in the experiments, we show that our algorithm is a stable alternative to matrix-scaling approach for solving regularized optimal transport problems and traffic matrix estimation problems. At the same time, our algorithm has the same complexity of each iteration and uses only matrix-vector multiplication and vector summation, which is amenable for parallel computations.
To sum up, our contributions in this paper are as follows.
\begin{enumerate}
	\item We propose a new accelerated gradient method with new analysis, which, in contrast to \cite{nesterov2005smooth} uses only one proximal mapping on each step and is adaptive to the Lipschitz constant of the objective's gradient, and, in contrast to \cite{gasnikov2016universal} does not accumulate the history of the gradients.
	\item In contrast to the existing methods for constrained problems in \cite{boyd2011distributed,goldstein2014fast,chambolle2011first-order,beck2014fast,tran-dinh2014constrained,yurtsever2015universal,tran-dinh2015smooth,malitsky2016first-order,patrascu2015rate,gasnikov2016efficient,chernov2016fast,li2016inexact}, we propose an algorithm for Problem $(P_1)$ with general linear equality and cone constraints; with optimal rate of convergence in terms of both primal objective residual and constraints infeasibility; with adaptivity to the Lipschitz constant of the objective's gradient; with online stopping criterion, which does not require the knowledge of this Lipschitz constant; with ability to work with entropy function as $f$.	
	\item In contrast to existing algorithms for solving entropy-regularized optimal transport problems \cite{bregman1967proof,sinkhorn1974diagonal,cuturi2013sinkhorn,kalantari1993rate,benamou2015iterative,schmitzer2016stabilized,allen2017much,altschuler2017near-linear}, we provide an algorithm with provable convergence rate, which can be easily implemented in practice and is more stable, when the regularization parameter is small. 
	\item In the experiments, we show that our algorithm is better than the Sinkhorn's method in situations of small regularization parameter in the primal problem, which means that the dual problem becomes less smooth problem.
\end{enumerate}

The rest of the paper is organized as follows. 
In Section \ref{S:prel}, we introduce notation, definition of approximate solution to Problem $(P_1)$, main assumptions, and particular examples of $(P_1)$ in applications. In Section \ref{S:ASTM}, we introduce our new accelerated gradient method for general convex problems and provide its convergence rate analysis. Section \ref{S:ASTM} is devoted to primal-dual algorithm for Problem $(P_1)$ and its convergence analysis. Finally, in Section \ref{S:Num}, we present the results of the numerical experiments for regularized optimal transport and traffic matrix estimation problems.

\section{Preliminaries}
\label{S:prel}

\subsection{Notation}
For any finite-dimensional real vector space $E$, we denote by $E^*$ its dual. We denote the value of a linear function $\lambda \in E^*$ at $x\in E$ by $\la \lambda, x \ra$. Let $\|\cdot\|_E$ denote some norm on $E$ and $\|\cdot\|_{E,*}$ denote the norm on $E^*$ which is dual to $\|\cdot\|_E$
$$
\|\lambda\|_{E,*} = \max_{\|x\|_E \leq 1} \la \lambda, x \ra.
$$
In the special case, when $E$ is a Euclidean space, we denote the standard Euclidean norm by $\|\cdot\|_2$. Note that, in this case, the dual norm is also Euclidean.
For a cone $K \subseteq E$, the dual cone $K^* \subseteq E^*$ is defined as $K^*:=\{ \lambda \in E^*: \la \lambda, x \ra \geq 0 \quad \forall x \in K\}$. 
By $\partial f(x)$ we denote the subdifferential of a function $f(x)$ at a point $x$. Let $E_1, E_2$ be two finite-dimensional real vector spaces. For a linear operator $A:E_1 \to E_2$, we define its norm as follows
$$
\|A\|_{E_1 \to E_2} = \max_{x \in E_1,u \in E_2^*} \{\la u, A x \ra : \|x\|_{E_1} = 1, \|u\|_{E_2,*} = 1 \}.
$$
For a linear operator $A:E_1 \to E_2$, we define the adjoint operator $A^T: E_2^* \to E_1^*$ in the following way
$$
\la u, A x \ra = \la A^T u, x \ra, \quad \forall u \in E_2^*, \quad x \in E_1.
$$
We say that a function $f: E \to \R$ has a $L$-Lipschitz-continuous gradient if it is differentiable and its gradient satisfies Lipschitz condition
$$
\|\nabla f(x) - \nabla f(y) \|_{E,*} \leq L \|x-y\|_E, \quad \forall x,y \in E.
$$
Note that, from this inequality, it follows that
\begin{equation}
f(y) \leq f(x) + \la \nabla f(x) , y-x \ra + \frac{L}{2} \|x-y\|_E^2, \quad \forall x,y \in E.
\label{eq:nfLipDef}
\end{equation}
Also, for any $t \in \R$, we denote by  $\lceil t \rceil$ the smallest integer greater than or equal to $t$.

We characterize the quality of an approximate solution to Problem $(P_1)$ by three quantities $\e_f,\e_{eq}, \e_{in} > 0$.
\begin{Def}
We say that a point $\hat{x}$ is an $(\e_f,\e_{eq}, \e_{in})$-solution to Problem $(P_1)$ iff the following inequalities hold 
\begin{equation}
|f(\hat{x}) - Opt[P_1]| \leq \e_f , \quad \|A_1\hat{x}-b_1\|_2 \leq \e_{eq}, \quad \rho(A_2\hat{x}-b_2,-K) \leq \e_{in}.
\label{eq:sol_def}
\end{equation}
Here $Opt[P_1]$ denotes the optimal function value for Problem $(P_1)$, $\rho(A_2\hat{x}-b_2,-K):=\max_{\lambda^{(2)} \in K^*, \|\lambda^{(2)}\|_2 \leq 1} \la \lambda^{(2)}, A_2\hat{x}_{k+1}-b_2 \ra$. 
\end{Def}
Note that the last inequality in \eqref{eq:sol_def} is a natural generalization of linear constraints infeasibility measure $\|(A_2x_k-b_2)_+\|_{2}$ for the case $K = \R_+^n$. Here the vector $v_+$ denotes the vector with components $[v_+]_i=(v_i)_+=\max\{v_i,0\}$.

\subsection{Dual Problem}

The Lagrange dual problem to Problem $(P_1)$ is
\begin{equation}
(D_1) \quad \quad \max_{\lambda \in \Lambda} \left\{ - \la \lambda^{(1)}, b_1 \ra - \la \lambda^{(2)}, b_2 \ra + \min_{x\in Q} \left( f(x) + \la A_1^T \lambda^{(1)} + A_2^T \lambda^{(2)} ,x \ra \right) \right\}.
\notag
\end{equation}
Here we denote $\Lambda =\{\lambda = (\lambda^{(1)},\lambda^{(2)})^T \in H_1^* \times H_2^*: \lambda^{(2)} \in K^*\}$. 
It is convenient to rewrite Problem $(D_1)$ in the equivalent form of a minimization problem
\begin{align}
& (P_2) \quad \min_{\lambda \in \Lambda} \left\{   \la \lambda^{(1)}, b_1 \ra + \la \lambda^{(2)}, b_2 \ra + \max_{x\in Q} \left( -f(x) - \la A_1^T \lambda^{(1)} + A_2^T \lambda^{(2)} ,x \ra \right) \right\}. \notag
\end{align}
It is obvious that
\begin{equation}
Opt[D_1]=-Opt[P_2],
\label{eq:D_1_P_2_sol}
\end{equation}
where $Opt[D_1]$, $Opt[P_2]$ are the optimal function value in Problem $(D_1)$ and Problem $(P_2)$ respectively. 
The following inequality follows from the weak duality
\begin{equation}
Opt[P_1] \geq Opt[D_1].
\label{eq:wD}
\end{equation}
We denote
\begin{equation}
\vp(\lambda) = \vp(\lambda^{(1)}, \lambda^{(2)}) = \la \lambda^{(1)}, b_1 \ra + \la \lambda^{(2)}, b_2 \ra + \max_{x\in Q} \left( -f(x) - \la A_1^T \lambda^{(1)} + A_2^T \lambda^{(2)} ,x \ra \right).
\label{eq:vp_def}
\end{equation}
Since $f$ is strongly convex, $\vp(\lambda)$ is a smooth function and its gradient is equal to (see e.g. \cite{nesterov2005smooth})
\begin{equation}
\nabla \vp(\lambda) = \left(
\begin{aligned}
b_1 - A_1 x (\lambda)\\
b_2 - A_2 x (\lambda)\\
\end{aligned}
 \right),
\label{eq:nvp}
\end{equation}
where $x (\lambda)$ is the unique solution of the strongly-convex problem
\begin{equation}
\max_{x\in Q} \left( -f(x) - \la A_1^T \lambda^{(1)} + A_2^T \lambda^{(2)} ,x \ra \right).
\label{eq:inner}
\end{equation}
Note that $\nabla \vp(\lambda)$ is Lipschitz-continuous (see e.g. \cite{nesterov2005smooth}) with constant
$$
L \leq \frac{1}{\gamma}\left(\|A_1\|_{E \to H_1}^2+ \|A_2\|_{E \to H_2}^2\right).
$$
Previous works \cite{patrascu2015rate,gasnikov2016efficient,chernov2016fast,li2016inexact} rely on this quantity in the algorithm and use it to define the stepsize of the proposed algorithm. The drawback of this approach is that the above bound for the Lipschitz constant can be way too pessimistic. In this work, we propose an adaptive method, which has the same complexity bound, but is faster in practice due to the use of a "`local"' estimate for $L$ in the stepsize definition.


\subsection{Main Assumptions}
\label{S:main_assum}
We make the following main assumptions
\begin{enumerate}
	\item Function $f$ is $\gamma$-strongly convex.
	\item The problem \eqref{eq:inner} is simple in the sense that, for any $x \in Q$, it has a closed form solution or can be solved very fast up to the machine precision.
	\item The dual problem $(D_1)$ has a solution $\lambda^*=(\lambda^{*(1)},\lambda^{*(2)})^T$ and there exist some $R_1, R_2 >0$ such that
	\begin{equation}
	\|\lambda^{*(1)}\|_{2} \leq R_1 < +\infty, \quad \|\lambda^{*(2)}\|_{2} \leq R_2 < +\infty. 
	\label{eq:l_bound}
	\end{equation}	 
\end{enumerate}
It is worth noting that the quantities $R_1, R_2$ will be used only in the convergence analysis, but not in the algorithm itself.

\subsection{Examples of Problem $(P_1)$}
\label{S:Exmpl}
In this subsection, we describe several particular problems which can be written in the form of Problem $(P_1)$.

\textbf{Traffic demand matrix estimation, \cite{wilson2011entropy}, and Regularized optimal transport problem, \cite{cuturi2013sinkhorn}.}
\begin{equation}
\min_{X \in \R^{p \times p}_+} \left\{ \gamma \sum_{i,j=1}^p x_{ij} \ln x_{ij} + \sum_{i,j=1}^p c_{ij} x_{ij}: Xe=\mu, X^Te=\nu \right\},
\label{eq:ROT}
\end{equation}
where $e \in \R^p$ is the vector of all ones, $\mu, \nu \in S_p(1):= \{x \in \R^p : \sum_{i=1}^p x_i = 1, x_i \geq 0 , i=1,...,p \}$, $c_{ij} \geq 0, i,j=1,...,p$ are given, $\gamma > 0$ is the regularization parameter, $X^T$ is the transpose matrix of $X$, $x_{ij}$ is the element of the matrix $X$ in the $i$-th row and the $j$-th column. This problem with small value of $\gamma$ is our primary focus in this paper. 

\textbf{General entropy-linear programming problem, \cite{fang1997entropy}.}

\begin{equation}
\min_{x \in S_n(1)} \left\{ \sum_{i=1}^n x_i \ln \left(x_i/\xi_i\right)  : Ax =b \right\}
\notag
\end{equation}
for some given $\xi \in \R^n_{++} = \{x \in \R^n: x_i > 0 , i=1,...,n \}$.  


\section{Adaptive Similar Triangles Method}
\label{S:ASTM}
In this section, we consider a general optimization problem
\begin{equation}
\min_{\lambda \in \Lambda} \vp(\lambda),
\label{eq:prStGen}
\end{equation}
where $\Lambda$ is a closed convex, generally speaking, unbounded, set, $\vp(\lambda)$ is a convex function with $L$-Lipschitz-continuous gradient.

\subsection{Setting}
In this subsection, we introduce {\it proximal setup}, which is usually used in proximal gradient methods, see e.g. \cite{ben-tal2015lectures}.
We choose some norm $\|\cdot\|$ on the space of vectors $\lambda$ and a {\it prox-function} $d(\lambda)$ which is continuous, convex on $\Lambda$ and
\begin{enumerate}
	\item admits a continuous in $\lambda \in \Lambda^0$ selection of subgradients 	$\nabla d(\lambda)$, where $\lambda \in \Lambda^0 \subseteq \Lambda$  is the set of all $\lambda$, where $\nabla d(\lambda)$ exists;
	\item is $1$-strongly convex on $\Lambda$ with respect to $\|\cdot\|$, i.e., for any $\lambda \in \Lambda^0, \eta \in \Lambda$, $d(\eta)-d(\lambda) -\la \nabla d(\lambda) ,\eta-\lambda \ra \geq \frac12\|\eta-\lambda\|^2$.
\end{enumerate} 
We define also the corresponding {\it Bregman divergence} $V[\zeta] (\lambda) := d(\lambda) - d(\zeta) - \la \nabla d(\zeta), \lambda - \zeta \ra$, $\lambda \in \Lambda, \zeta \in \Lambda^0$. 
It is easy to see that
\begin{equation}
V[\zeta] (\lambda) \geq \frac12\|\lambda - \zeta\|^2, \quad \lambda \in \Lambda, \zeta \in \Lambda^0.
\label{eq:BFLowBound}
\end{equation}
Standard proximal setups, i.e. Euclidean, entropy, $\ell_1/\ell_2$, simplex,  nuclear norm, spectahedron can be found in \cite{ben-tal2015lectures}.

\subsection{Algorithm and Complexity Analysis}
In this subsection we present Adaptive Similar Triangles Method (ASTM) (see Algorithm \ref{Alg:ASTM} below). The name of the method is motivated by two factors. Firstly, it adaptively chooses the stepsize using local estimate $M_k$ of the Lipschitz constant $L$ of the gradient. Secondly, the choice of the point $\eta_{k+1}$ is such that the triangle $(\eta_k,\zeta_k,\zeta_{k+1})$ is similar to the triangle $(\eta_k,\lambda_{k+1},\eta_{k+1})$. 
\begin{algorithm}[h!]
\caption{Adaptive Similar Triangles Method (ASTM)}
\label{Alg:ASTM}
{\small
\begin{algorithmic}[1]
		\REQUIRE starting point $\lambda_0 \in \Lambda^0$, initial guess $L_0 >0$, prox-setup: $d(\lambda)$ -- $1$-strongly convex w.r.t. $\|\cdot\|$, $V[\zeta] (\lambda) := d(\lambda) - d(\zeta) - \la \nabla d(\zeta), \lambda - \zeta \ra$, $\lambda \in \Lambda, \zeta \in \Lambda^0$.
		\STATE Set $k=0$, $C_0=\alpha_0=0$, $\eta_0=\zeta_0=\lambda_0$.
		\REPEAT
			\STATE Set $M_k=L_k/2$.
			\REPEAT
				\STATE Set $M_k=2M_k$, find $\alpha_{k+1}$ as the largest root of the equation
				\begin{equation}
				C_{k+1}:=C_k+\alpha_{k+1} = M_k\alpha_{k+1}^2.
				\label{eq:alpQuadEq}
				\end{equation}
				\STATE 
				\begin{equation}
				\lambda_{k+1} = \frac{\alpha_{k+1}\zeta_k + C_k \eta_k}{C_{k+1}} .
				\label{eq:lambdakp1Def}
				\end{equation}
				\STATE 
				\begin{equation}
				\zeta_{k+1}=\arg \min_{\lambda \in \Lambda} \{V[\zeta_{k}](\lambda) + \alpha_{k+1}(\vp(\lambda_{k+1}) + \langle \nabla \vp(\lambda_{k+1}), \lambda - \lambda_{k+1} \rangle) \}.
				\label{eq:zetakp1Def}
				\end{equation}
				\STATE 
				\begin{equation}
				\eta_{k+1} = \frac{\alpha_{k+1}\zeta_{k+1} + C_k \eta_k}{C_{k+1}}.
				\label{eq:etakp1Def}
				\end{equation}
			\UNTIL{
			\begin{equation}
			\vp(\eta_{k+1}) \leq \vp(\lambda_{k+1}) + \la \nabla \vp(\lambda_{k+1}) ,\eta_{k+1} - \lambda_{k+1} \ra  +\frac{M_k}{2}\|\eta_{k+1} - \lambda_{k+1}\|^2.
			\label{eq:lipConstCheck}
			\end{equation}}
			\STATE Set $L_{k+1}=M_k/2$, $k=k+1$.
		\UNTIL{Option 1: $k = k_{max}$. \\ 
					Option 2: $R^2/C_k \leq \e$. \\
					Option 3:  
					$$
					\vp(\eta_k) - \min_{\lambda \in \Lambda: V[\zeta_0](\lambda) \leq R^2} \left\{ \sum_{i=0}^k \frac{\alpha_{i}}{C_k} \left( \vp(\lambda_{i}) + \la \nabla \vp(\lambda_{i}), \lambda - \lambda_{i}\ra \right) \right\} \leq \e.
					$$
					Here $R$ is such that $ V[\zeta_0](\lambda_*) \leq R^2$ and $\e$ is the desired accuracy.
		}
		\ENSURE The point $\eta_{k+1}$.	
\end{algorithmic}
}
\end{algorithm}

\begin{Lm}
	Algorithm \ref{Alg:ASTM} is defined correctly in the sense that the inner cycle of checking the inequality \eqref{eq:lipConstCheck} is finite.
	\label{Lm:0}
\end{Lm}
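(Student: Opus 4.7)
The plan is to show that the inner loop's termination condition \eqref{eq:lipConstCheck} becomes satisfied as soon as the running estimate $M_k$ exceeds the true Lipschitz constant $L$ of $\nabla\vp$, and then to observe that $M_k$ reaches any prescribed level after finitely many doublings.

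First I would note that the inner cycle starts with $M_k = L_k/2$ and the very first line inside the loop multiplies $M_k$ by $2$, so after $j\geq 1$ inner iterations the value of $M_k$ is $L_k\cdot 2^{j-1}$. Hence the smallest $j$ with $M_k \ge L$ is at most $\bigl\lceil \log_2(2L/L_k)\bigr\rceil$, which is finite.

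Next I would invoke the global quadratic upper bound implied by the $L$-Lipschitz continuity of $\nabla\vp$, namely inequality \eqref{eq:nfLipDef}: for every $x,y\in\Lambda$,
\[
\vp(y) \le \vp(x) + \la \nabla\vp(x),\, y-x\ra + \frac{L}{2}\|y-x\|^2.
\]
Applying this with $x=\lambda_{k+1}$ and $y=\eta_{k+1}$ (both well defined from the current iterate of the inner loop via \eqref{eq:lambdakp1Def}--\eqref{eq:etakp1Def}, regardless of the value of $M_k$) yields
\[
\vp(\eta_{k+1}) \le \vp(\lambda_{k+1}) + \la \nabla\vp(\lambda_{k+1}),\, \eta_{k+1}-\lambda_{k+1}\ra + \frac{L}{2}\|\eta_{k+1}-\lambda_{k+1}\|^2,
\]
so once $M_k \ge L$ the termination condition \eqref{eq:lipConstCheck} is automatically satisfied. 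Combined with the previous step this shows the inner loop exits after no more than $\bigl\lceil \log_2(2L/L_k)\bigr\rceil$ doublings.

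There is no real obstacle here; the only thing to be careful about is that $\lambda_{k+1}$ and $\eta_{k+1}$ themselves depend on the current $M_k$ through $\alpha_{k+1}$ determined by \eqref{eq:alpQuadEq}. This dependence is harmless because the Lipschitz inequality is valid for \emph{every} pair of points in $\Lambda$, so the argument above applies to whatever $\lambda_{k+1}$ and $\eta_{k+1}$ the inner loop produces at the moment $M_k$ first exceeds $L$.
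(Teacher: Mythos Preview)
Your proof is correct and follows essentially the same approach as the paper: doubling $M_k$ eventually makes $M_k\ge L$, at which point the descent lemma \eqref{eq:nfLipDef} guarantees \eqref{eq:lipConstCheck}. Your version is slightly more explicit (giving the bound $\lceil\log_2(2L/L_k)\rceil$ on the number of doublings and noting that the dependence of $\lambda_{k+1},\eta_{k+1}$ on $M_k$ is harmless), but the argument is the same.
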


\begin{proof}
Since, before each check of the inequality \eqref{eq:lipConstCheck} on the step $k$, we multiply $M_k$ by 2, after finite number of these multiplications, we will have $M_k \geq L$. Since $\vp$ has $L$-Lipschitz-continuous gradient, due to \eqref{eq:nfLipDef}, we obtain that \eqref{eq:lipConstCheck} holds after finite number of these repetitions.
\end{proof}

\begin{Lm}
	Let the sequences $\{\lambda_k, \eta_k, \zeta_k, \alpha_k, C_k \}$, $k\geq 0$ be generated by Algorithm \ref{Alg:ASTM}. Then,  for all $\lambda \in \Lambda$, it holds that
	\begin{equation}
	\alpha_{k+1}\langle \nabla \vp(\lambda_{k+1}), \zeta_k - \lambda\rangle \leq C_{k+1}(\vp(\lambda_{k+1}) - \vp(\eta_{k+1})) + V[\zeta_k](\lambda) - V[\zeta_{k+1}](\lambda). 
	\label{eq:Lm1}
	\end{equation}
	\label{Lm:1}
\end{Lm}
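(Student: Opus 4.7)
The plan is to combine three standard ingredients: the first-order optimality condition for the proximal subproblem defining $\zeta_{k+1}$, the three-point identity for Bregman divergences, and the just-verified descent inequality \eqref{eq:lipConstCheck}. The algebraic glue holding them together is the ``similar triangles'' relation between $(\lambda_{k+1},\eta_{k+1})$ and $(\zeta_k,\zeta_{k+1})$ encoded in \eqref{eq:lambdakp1Def} and \eqref{eq:etakp1Def}, together with the defining equation $C_{k+1}=M_k\alpha_{k+1}^2$.

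First, I would write the optimality condition for the minimization problem \eqref{eq:zetakp1Def}: for every $\lambda\in\Lambda$,
\[
\langle \nabla d(\zeta_{k+1}) - \nabla d(\zeta_k) + \alpha_{k+1}\nabla \vp(\lambda_{k+1}),\, \lambda - \zeta_{k+1}\rangle \geq 0.
\]
Combining this with the three-point identity
\[
\langle \nabla d(\zeta_{k+1}) - \nabla d(\zeta_k),\, \lambda - \zeta_{k+1}\rangle = V[\zeta_k](\lambda) - V[\zeta_{k+1}](\lambda) - V[\zeta_k](\zeta_{k+1}),
\]
which follows by directly expanding all three Bregman terms via $V[\zeta](\lambda)=d(\lambda)-d(\zeta)-\langle\nabla d(\zeta),\lambda-\zeta\rangle$, yields
\[
\alpha_{k+1}\langle \nabla \vp(\lambda_{k+1}),\, \zeta_{k+1}-\lambda\rangle \leq V[\zeta_k](\lambda) - V[\zeta_{k+1}](\lambda) - V[\zeta_k](\zeta_{k+1}).
\]

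Next, I would split the inner product on the left-hand side of the target inequality as
\[
\langle \nabla \vp(\lambda_{k+1}),\, \zeta_k - \lambda\rangle = \langle \nabla \vp(\lambda_{k+1}),\, \zeta_k - \zeta_{k+1}\rangle + \langle \nabla \vp(\lambda_{k+1}),\, \zeta_{k+1}-\lambda\rangle,
\]
use the bound above on the second piece, and invoke strong convexity of $d$ in the form $V[\zeta_k](\zeta_{k+1})\geq \tfrac12\|\zeta_{k+1}-\zeta_k\|^2$. It remains to show
\[
\alpha_{k+1}\langle \nabla \vp(\lambda_{k+1}),\, \zeta_k - \zeta_{k+1}\rangle - \tfrac12\|\zeta_{k+1}-\zeta_k\|^2 \leq C_{k+1}\bigl(\vp(\lambda_{k+1})-\vp(\eta_{k+1})\bigr).
\]

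Finally, subtracting \eqref{eq:lambdakp1Def} from \eqref{eq:etakp1Def} gives the key similarity identity $\eta_{k+1}-\lambda_{k+1} = \tfrac{\alpha_{k+1}}{C_{k+1}}(\zeta_{k+1}-\zeta_k)$, so that $\alpha_{k+1}(\zeta_k-\zeta_{k+1}) = -C_{k+1}(\eta_{k+1}-\lambda_{k+1})$ and $\|\zeta_{k+1}-\zeta_k\|^2 = \tfrac{C_{k+1}^2}{\alpha_{k+1}^2}\|\eta_{k+1}-\lambda_{k+1}\|^2$. Using $C_{k+1}=M_k\alpha_{k+1}^2$ from \eqref{eq:alpQuadEq} to simplify the coefficient as $\tfrac{C_{k+1}^2}{\alpha_{k+1}^2}=M_k C_{k+1}$, the remaining inequality becomes, after dividing by $C_{k+1}$, exactly the descent certificate \eqref{eq:lipConstCheck}, which holds by the exit test of the inner loop. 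The main (though still routine) obstacle is simply keeping the signs and the factors of $C_{k+1}/\alpha_{k+1}$ straight when converting between $(\zeta_k-\zeta_{k+1})$-quantities and $(\eta_{k+1}-\lambda_{k+1})$-quantities; once the similarity identity is written down, the rest is bookkeeping.
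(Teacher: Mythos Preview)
Your proposal is correct and follows essentially the same route as the paper's proof: the optimality condition for \eqref{eq:zetakp1Def}, the three-point Bregman identity, the strong-convexity lower bound $V[\zeta_k](\zeta_{k+1})\geq\tfrac12\|\zeta_{k+1}-\zeta_k\|^2$, the similar-triangles relation $\eta_{k+1}-\lambda_{k+1}=\tfrac{\alpha_{k+1}}{C_{k+1}}(\zeta_{k+1}-\zeta_k)$, the identity $C_{k+1}=M_k\alpha_{k+1}^2$, and finally the exit test \eqref{eq:lipConstCheck}. The only cosmetic difference is that the paper writes the optimality condition in terms of $\nabla V[\zeta_k](\zeta_{k+1})$ rather than $\nabla d(\zeta_{k+1})-\nabla d(\zeta_k)$, which is the same object.
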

\begin{proof}
Note that, from the optimality condition in \eqref{eq:zetakp1Def}, for any $\lambda \in \Lambda$, we have
\begin{equation}
\la \nabla V[\zeta_{k}](\zeta_{k+1}) + \alpha_{k+1} \nabla \vp(\lambda_{k+1}), \lambda - \zeta_{k+1} \ra \geq 0.
\label{eq:Lm1Pr1}
\end{equation}
By the definition of $V[\zeta](\lambda)$, we obtain, for any $\lambda \in \Lambda$,
\begin{align}
V[\zeta_k](\lambda) - V[\zeta_{k+1}](\lambda) - V[\zeta_k](\zeta_{k+1})  = &d(\lambda) - d(\zeta_k) - \la \nabla d(\zeta_k), \lambda-\zeta_k \ra \notag \\
 &- \left( d(\lambda) - d(\zeta_{k+1}) - \la \nabla d(\zeta_{k+1}), \lambda-\zeta_{k+1} \ra  \right) \notag \\
 &- \left( d(\zeta_{k+1}) - d(\zeta_k) - \la \nabla d(\zeta_k), \zeta_{k+1}-\zeta_k \ra \right) \notag \\
 &\hspace{-1em}= \la \nabla d(\zeta_k) - \nabla d(\zeta_{k+1}) , \zeta_{k+1} - \lambda \ra \notag \\
 &\hspace{-1em}= \la - \nabla V[\zeta_{k}](\zeta_{k+1}) , \zeta_{k+1} - \lambda \ra. 
\label{eq:magicV}
\end{align}
Further, for any $\lambda \in \Lambda$,
\begin{align}
\alpha_{k+1}\la \nabla \vp(\lambda_{k+1}), \zeta_{k} - \lambda\ra 
		&= \alpha_{k+1}\la \nabla \vp(\lambda_{k+1}), \zeta_{k} - \zeta_{k+1}\ra + \alpha_{k+1}\la \nabla \vp(\lambda_{k+1}), \zeta_{k+1} - \lambda\ra \notag \\
    &\hspace{-7em}\stackrel{\eqref{eq:Lm1Pr1}}{\leq} \alpha_{k+1}\la \nabla \vp(\lambda_{k+1}), \zeta_{k} - \zeta_{k+1}\ra + \la -\nabla V[\zeta_{k}](\zeta_{k+1}) , \zeta_{k+1} - \lambda  \ra \notag \\
	  &\hspace{-7em}\stackrel{\eqref{eq:magicV}}{=} \alpha_{k+1}\la \nabla \vp(\lambda_{k+1}), \zeta_{k} - \zeta_{k+1}\ra + V[\zeta_k](\lambda) - V[\zeta_{k+1}](\lambda) - V[\zeta_k](\zeta_{k+1}) \notag \\
		&\hspace{-7em}\stackrel{\eqref{eq:BFLowBound}}{\leq} \alpha_{k+1}\la \nabla \vp(\lambda_{k+1}), \zeta_{k} - \zeta_{k+1}\ra + V[\zeta_k](\lambda) - V[\zeta_{k+1}](\lambda) - \frac12\|\zeta_k-\zeta_{k+1}\|^2 \notag \\\
		&\hspace{-8em}\stackrel{\eqref{eq:lambdakp1Def},\eqref{eq:etakp1Def}}{=} C_{k+1} \la \nabla \vp(\lambda_{k+1}), \lambda_{k+1} - \eta_{k+1}\ra + V[\zeta_k](\lambda) - V[\zeta_{k+1}](\lambda) - \frac{C_{k+1}^2}{2\alpha_{k+1}^2}\|\lambda_{k+1} - \eta_{k+1}\|^2 \notag \\
		&\hspace{-7em}\stackrel{\eqref{eq:alpQuadEq}}{=} C_{k+1}\left(\la \nabla \vp(\lambda_{k+1}), \lambda_{k+1} - \eta_{k+1}\ra - \frac{M_k}{2}\|\lambda_{k+1} - \eta_{k+1}\|^2  \right) + V[\zeta_k](\lambda) - V[\zeta_{k+1}](\lambda) \notag \\
		&\hspace{-7em}\stackrel{\eqref{eq:lipConstCheck}}{\leq} C_{k+1}\left(  \vp(\lambda_{k+1}) - \vp(\eta_{k+1})  \right)  + V[\zeta_k](\lambda) - V[\zeta_{k+1}](\lambda) . \notag
\end{align}
\end{proof}

\begin{Lm}
Let the sequences $\{\lambda_k, \eta_k, \zeta_k, \alpha_k, C_k \}$, $k\geq 0$ be generated by Algorithm \ref{Alg:ASTM}. Then,  for all $\lambda \in \Lambda$, it holds that
	\begin{equation}
	C_{k+1} \vp(\eta_{k+1}) - C_k \vp(\eta_{k}) \leq \alpha_{k+1} \left( \vp(\lambda_{k+1}) + \la \nabla \vp(\lambda_{k+1}), \lambda - \lambda_{k+1}\ra \right) + V[\zeta_k](\lambda) - V[\zeta_{k+1}](\lambda).
	\label{eq:Lm2}
	\end{equation}
	\label{Lm:2}
\end{Lm}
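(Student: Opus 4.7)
The plan is to reduce Lemma 2 to Lemma 1 by using two ingredients: the affine relation \eqref{eq:lambdakp1Def} that defines $\lambda_{k+1}$ as a convex combination of $\zeta_k$ and $\eta_k$, and the convexity of $\vp$ applied at the point $\eta_k$.

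First I would take the left-hand side of Lemma 1 and rewrite $\alpha_{k+1}\zeta_k$ using \eqref{eq:lambdakp1Def}, namely $\alpha_{k+1}\zeta_k = C_{k+1}\lambda_{k+1} - C_k\eta_k$. Together with $C_{k+1} = C_k + \alpha_{k+1}$ from \eqref{eq:alpQuadEq}, this yields the identity
\begin{equation}
\alpha_{k+1}\la \nabla \vp(\lambda_{k+1}), \zeta_k - \lambda\ra = C_k \la \nabla \vp(\lambda_{k+1}), \lambda_{k+1} - \eta_k \ra + \alpha_{k+1}\la \nabla \vp(\lambda_{k+1}), \lambda_{k+1} - \lambda \ra. \notag
\end{equation}
Substituting this into Lemma 1 and rearranging gives
\begin{equation}
C_{k+1}\vp(\eta_{k+1}) \leq C_{k+1}\vp(\lambda_{k+1}) - C_k \la \nabla \vp(\lambda_{k+1}), \lambda_{k+1} - \eta_k\ra + \alpha_{k+1}\la \nabla \vp(\lambda_{k+1}), \lambda - \lambda_{k+1}\ra + V[\zeta_k](\lambda) - V[\zeta_{k+1}](\lambda). \notag
\end{equation}

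Next I would invoke convexity of $\vp$ at $\eta_k$, i.e. $\vp(\eta_k) \geq \vp(\lambda_{k+1}) + \la \nabla \vp(\lambda_{k+1}), \eta_k - \lambda_{k+1}\ra$, which, multiplied by $C_k \geq 0$, gives
\begin{equation}
-C_k\vp(\eta_k) \leq -C_k\vp(\lambda_{k+1}) + C_k\la \nabla \vp(\lambda_{k+1}), \lambda_{k+1} - \eta_k\ra. \notag
\end{equation}
Adding the last two displays, the $C_k\la \nabla \vp(\lambda_{k+1}), \lambda_{k+1}-\eta_k\ra$ terms cancel and, using $C_{k+1} - C_k = \alpha_{k+1}$, the coefficient of $\vp(\lambda_{k+1})$ collapses to $\alpha_{k+1}$, producing exactly \eqref{eq:Lm2}.

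I do not anticipate a real obstacle here: the key observation is purely algebraic, namely that the "similar-triangles" identity $\alpha_{k+1}\zeta_k = C_{k+1}\lambda_{k+1} - C_k\eta_k$ lets one convert a scalar product against $\zeta_k$ into one against $\lambda_{k+1}$ plus a term against $\eta_k$, which is then absorbed by the convexity lower bound $\vp(\eta_k) \geq \vp(\lambda_{k+1}) + \la \nabla \vp(\lambda_{k+1}), \eta_k-\lambda_{k+1}\ra$. The only mild subtlety is making sure the signs match when the convexity inequality is multiplied by the nonnegative scalar $C_k$ and added; this uses $C_k \geq 0$, which follows from $C_0 = 0$ and $\alpha_i \geq 0$.
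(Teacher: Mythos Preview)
Your proof is correct and follows essentially the same approach as the paper: both arguments combine Lemma~\ref{Lm:1}, the affine identity $\alpha_{k+1}\zeta_k = C_{k+1}\lambda_{k+1} - C_k\eta_k$ from \eqref{eq:lambdakp1Def}, and the convexity inequality $\vp(\eta_k)\geq \vp(\lambda_{k+1})+\la\nabla\vp(\lambda_{k+1}),\eta_k-\lambda_{k+1}\ra$. The only cosmetic difference is that the paper begins from $\alpha_{k+1}\la\nabla\vp(\lambda_{k+1}),\lambda_{k+1}-\lambda\ra$ and works forward, whereas you start from the conclusion of Lemma~\ref{Lm:1} and substitute; the algebra is identical.
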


\begin{proof}
For any $\lambda \in \Lambda$,
\begin{align}
\alpha_{k+1}\la \nabla \vp(\lambda_{k+1}), \lambda_{k+1} - \lambda\ra &= \alpha_{k+1}\la \nabla \vp(\lambda_{k+1}), \lambda_{k+1} - \zeta_{k}\ra + \alpha_{k+1}\la \nabla \vp(\lambda_{k+1}), \zeta_{k} - \lambda\ra \notag \\
    &\hspace{-8em}\stackrel{\eqref{eq:alpQuadEq},\eqref{eq:lambdakp1Def}}{=} C_{k}\la \nabla \vp(\lambda_{k+1}), \eta_{k} - \lambda_{k+1}\ra + \alpha_{k+1}\la \nabla \vp(\lambda_{k+1}), \zeta_{k} - \lambda\ra \notag \\
		&\hspace{-8em}\stackrel{\text{conv-ty}}{\leq} C_{k}\left(\vp(\eta_{k}) - \vp(\lambda_{k+1}) \right) + \alpha_{k+1}\la \nabla \vp(\lambda_{k+1}), \zeta_{k} - \lambda\ra \notag \\
		&\hspace{-7em}\stackrel{\eqref{eq:Lm1}}{\leq} C_{k}\left(\vp(\eta_{k}) - \vp(\lambda_{k+1}) \right) + C_{k+1}\left(  \vp(\lambda_{k+1}) - \vp(\eta_{k+1})  \right)  + V[\zeta_k](\lambda) - V[\zeta_{k+1}](\lambda) \notag \\		
		&\hspace{-7em}= \alpha_{k+1} \vp(\lambda_{k+1}) + C_k \vp(\eta_{k}) - C_{k+1} \vp(\eta_{k+1}) + V[\zeta_k](\lambda) - V[\zeta_{k+1}](\lambda).
\end{align}
Rearranging terms, we obtain the statement of the Lemma.
\end{proof}

\begin{Th}
\label{Th:ASTMConv}
Let the sequences $\{\lambda_k, \eta_k, \zeta_k, \alpha_k, C_k \}$, $k\geq 0$ be generated by Algorithm \ref{Alg:ASTM}. Then,  for all $k \geq 0$, it holds that
	\begin{equation}
	C_k\vp(\eta_k) \leq \min_{\lambda \in \Lambda} \left\{ \sum_{i=0}^k \alpha_{i} \left( \vp(\lambda_{i}) + \la \nabla \vp(\lambda_{i}), \lambda - \lambda_{i}\ra \right) + V[\zeta_0](\lambda) \right\}.
	\label{eq:ASTMConv}
	\end{equation}
The number of oracle calls after the iteration $k \geq 0$ does not exceed
\begin{equation}
 4k + 4 +2\log_2\left(\frac{L}{L_0}\right),
\label{eq:ASTMOrC}
\end{equation}
where $L$ is the Lipschitz constant for the gradient of $\vp$. 
\end{Th}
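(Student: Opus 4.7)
The plan is to prove the two parts of the theorem in turn, both via telescoping arguments built on top of Lemma~\ref{Lm:2}.

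For inequality \eqref{eq:ASTMConv}, I would simply sum the conclusion \eqref{eq:Lm2} of Lemma~\ref{Lm:2} over $i = 0, \dots, k-1$. The left-hand sides telescope to $C_k \vp(\eta_k) - C_0 \vp(\eta_0) = C_k \vp(\eta_k)$ since $C_0 = 0$, and the Bregman-divergence differences telescope to $V[\zeta_0](\lambda) - V[\zeta_k](\lambda)$, which is bounded from above by $V[\zeta_0](\lambda)$ via \eqref{eq:BFLowBound}. Because $\alpha_0 = 0$, the remaining functional sum on the right can be re-indexed to run from $i = 0$ to $k$ without changing its value, matching the form of \eqref{eq:ASTMConv}. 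Taking the minimum over $\lambda \in \Lambda$ completes this part; it is essentially a one-line telescoping argument given Lemma~\ref{Lm:2}.

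For the oracle bound \eqref{eq:ASTMOrC}, let $j_i$ denote the number of passes through the inner repeat-loop during outer iteration $i$. Each pass makes one oracle call at $\lambda_{i+1}$ (providing the $\nabla \vp(\lambda_{i+1})$ needed in \eqref{eq:zetakp1Def} together with $\vp(\lambda_{i+1})$ used in the check) and one additional call at $\eta_{i+1}$ to evaluate $\vp(\eta_{i+1})$ in \eqref{eq:lipConstCheck}, so at most two oracle calls per pass. Since $M_i$ is initialised to $L_i/2$ and doubled once at the top of every pass, at termination $M_i = 2^{j_i-1} L_i$, so the update rule $L_{i+1} = M_i/2$ yields the key identity $j_i = 2 + \log_2(L_{i+1}/L_i)$. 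Summing over $i = 0,\dots,k$ telescopes to
$$
\sum_{i=0}^{k} j_i \;=\; 2(k+1) + \log_2\bigl(L_{k+1}/L_0\bigr).
$$

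The main obstacle is controlling $L_{k+1}$. Here I would exploit that, by \eqref{eq:nfLipDef}, the check \eqref{eq:lipConstCheck} is automatically satisfied as soon as $M_i \geq L$, so the doubling procedure cannot overshoot $L$ by more than a single factor of~$2$: a short case analysis yields $L_{i+1} \leq \max(L_i, L)$, and induction on $i$ gives $L_{k+1} \leq \max(L_0, L)$. In the nontrivial regime $L_0 \leq L$ this implies $\log_2(L_{k+1}/L_0) \leq \log_2(L/L_0)$, so multiplying the telescoped sum by the two oracle calls per pass produces exactly the stated bound $4k + 4 + 2\log_2(L/L_0)$.
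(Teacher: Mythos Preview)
Your proposal is correct and follows essentially the same route as the paper: the telescoping of Lemma~\ref{Lm:2} for \eqref{eq:ASTMConv}, and the telescoping identity $j_i = 2 + \log_2(L_{i+1}/L_i)$ (equivalently the paper's $j_i = 2 + \log_2(M_i/M_{i-1})$, since $L_{i+1}=M_i/2$) together with the bound $M_i \le 2L$ for \eqref{eq:ASTMOrC}. Your explicit remark that the oracle bound as stated requires $L_0 \le L$ is a fair caveat that the paper leaves implicit.
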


\begin{proof}
Let us change the counter in Lemma \ref{Lm:1} from $k$ to $i$ and sum all the inequalities for $i=0,...,k-1$. Then, for any $\lambda \in \Lambda$,
\begin{equation}
C_k\vp(\eta_{k}) - C_0 \vp(\eta_{0}) \leq \sum_{i=0}^{k-1} \alpha_{i+1} \left( \vp(\lambda_{i+1}) + \la \nabla \vp(\lambda_{i+1}), \lambda - \lambda_{i+1}\ra \right) + V[\zeta_0](\lambda) - V[\zeta_{k}](\lambda).
\label{eq:Th1Pr1}
\end{equation}
Whence, since $C_0 = \alpha_0 = 0$ and $V[\zeta_{k}](\lambda) \geq 0$, 
\begin{equation}
C_k\vp(\eta_{k}) \leq \sum_{i=0}^{k} \alpha_{i} \left( \vp(\lambda_{i}) + \la \nabla \vp(\lambda_{i}), \lambda - \lambda_{i}\ra \right) + V[\zeta_0](\lambda), \quad \lambda \in \Lambda.
\label{eq:Th1Pr2}
\end{equation}
Taking in the right hand side the minimum in $\lambda \in \Lambda$, we obtain the first statement of the Theorem.

The second statement of the Theorem is proved in the same way as in \cite{nesterov2006cubic}, but we provide the proof for the reader's convenience. Let us again change the iteration counter in Algorithm \ref{Alg:ASTM} from $k$ to $i$.
Let $j_i \geq 1$ be the total number of checks of the inequality \eqref{eq:lipConstCheck} on the step $i \geq 0$. Then, $j_0 = 1+\log_2\frac{M_0}{L_0}$ and, for $i \geq 1$, $M_i = 2^{j_i-1}L_i = 2^{j_i-1}\frac{M_{i-1}}{2}$. Thus, $j_i = 2+ \log_2\frac{M_i}{M_{i-1}}$, $i \geq 1$. Further, by the same reasoning as in Lemma \ref{Lm:1}, we obtain that $M_i \leq 2L$, $i\geq 0$. Then, the total number of checks of the inequality \eqref{eq:lipConstCheck} is 
\begin{equation}
\sum_{i=0}^{k}j_i=1+\log_2\frac{M_0}{L_0} + \sum_{i=1}^{k}\left(2+ \log_2\frac{M_i}{M_{i-1}}\right) = 2k+1+\log_2\frac{M_{k}}{L_0} \leq 2k+2+\log_2\frac{L}{L_0}.
\notag
\end{equation}
At the same time, each check of the inequality \eqref{eq:lipConstCheck} requires two oracle calls. This proves the second statement of the Theorem.
\end{proof}	

\begin{Cor}
\label{Cor:ASTMConv2}
Let the sequences $\{\lambda_k, \eta_k, \zeta_k, \alpha_k, C_k \}$, $k\geq 0$ be generated by Algorithm \ref{Alg:ASTM}. Then,  for all $k \geq 0$, it holds that
	\begin{equation}
	\vp(\eta_k) - \min_{\lambda \in \Lambda} \vp(\lambda) \leq \frac{V[\zeta_0](\lambda_*)}{C_k} ,
	\label{eq:ASTMConv2}
	\end{equation}
	where $\lambda_*$ is the solution of $\min_{\lambda \in \Lambda} \vp(\lambda)$ s.t. $V[\zeta_0](\lambda_*)$ is minimal among all the solutions.
\end{Cor}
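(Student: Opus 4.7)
The plan is to derive the corollary directly from Theorem \ref{Th:ASTMConv} by exploiting convexity of $\vp$ inside the minimum on the right-hand side of \eqref{eq:ASTMConv}. Since $\vp$ is convex and differentiable, for every $i$ and every $\lambda \in \Lambda$ we have the linearization bound $\vp(\lambda_i) + \la \nabla \vp(\lambda_i), \lambda - \lambda_i \ra \leq \vp(\lambda)$. Substituting this into \eqref{eq:ASTMConv} replaces each affine term by $\vp(\lambda)$ and yields
\begin{equation*}
C_k \vp(\eta_k) \leq \min_{\lambda \in \Lambda}\left\{ \left(\sum_{i=0}^k \alpha_i\right) \vp(\lambda) + V[\zeta_0](\lambda) \right\}.
\end{equation*}

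Next I would identify the coefficient $\sum_{i=0}^k \alpha_i$ with $C_k$. This is immediate from the recursion $C_{i+1} = C_i + \alpha_{i+1}$ in \eqref{eq:alpQuadEq} together with the initialization $C_0 = \alpha_0 = 0$: telescoping gives $C_k = \sum_{i=0}^k \alpha_i$. Hence the previous display becomes
\begin{equation*}
C_k \vp(\eta_k) \leq \min_{\lambda \in \Lambda}\left\{ C_k \vp(\lambda) + V[\zeta_0](\lambda) \right\}.
\end{equation*}

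Finally I would evaluate the right-hand side at $\lambda = \lambda_*$, a minimizer of $\vp$ on $\Lambda$ (any minimizer; the statement specifies the one with smallest $V[\zeta_0]$-value, which is exactly what makes the bound tightest). This gives $C_k \vp(\eta_k) \leq C_k \vp(\lambda_*) + V[\zeta_0](\lambda_*)$; dividing by $C_k > 0$ produces the claimed estimate $\vp(\eta_k) - \vp(\lambda_*) \leq V[\zeta_0](\lambda_*)/C_k$.

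There is essentially no obstacle here since the heavy lifting has already been done in Theorem \ref{Th:ASTMConv}; the only thing to be slightly careful about is that $C_k$ is strictly positive for $k \geq 1$ (it is, since $\alpha_1 > 0$ as the positive root of a quadratic with positive leading coefficient) and that a minimizer $\lambda_*$ exists in $\Lambda$, which is part of the standing setting. Note also that the bound does not require $\Lambda$ to be bounded: the minimum on the right of \eqref{eq:ASTMConv} is automatically attained because the prox-term $V[\zeta_0](\lambda)$ is coercive by $1$-strong convexity of $d$.
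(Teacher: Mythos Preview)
Your proposal is correct and follows essentially the same approach as the paper: use convexity of $\vp$ to replace each linearization in \eqref{eq:ASTMConv} by $\vp(\lambda)$, identify $\sum_{i=0}^k \alpha_i = C_k$ via the recursion $C_{i+1}=C_i+\alpha_{i+1}$ with $C_0=\alpha_0=0$, and evaluate at $\lambda_*$. Your additional remarks about $C_k>0$ for $k\geq 1$ and the coercivity of the prox-term are accurate and even slightly more careful than the paper, which glosses over the $k=0$ case.
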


\begin{proof}
Let $\lambda_*$ be the solution of $\min_{\lambda \in \Lambda} \vp(\lambda)$ s.t. $V[\zeta_0](\lambda_*)$ is minimal among all the solutions. Using convexity of $\vp$, from Theorem 	\ref{Th:ASTMConv}, we obtain
$$
C_k\vp(\eta_k) \leq \sum_{i=0}^{k} \alpha_{i}   \vp(\lambda_*) + V[\zeta_0](\lambda_*).
$$
Since $C_k = \sum_{i=0}^{k} \alpha_{i}$, we obtain the statement of the Corollary.
\end{proof}

The following Corollary justifies the stopping criteria in Algorithm \ref{Alg:ASTM}.
\begin{Cor}
\label{Cor:ASTMStop}
Let $\lambda_*$ be a solution of $\min_{\lambda \in \Lambda} \vp(\lambda)$ such that $V[\zeta_0](\lambda_*)$ is minimal among all the solutions. Let $R$ be such that $ V[\zeta_0](\lambda_*) \leq R^2$ and $\e$ be the desired accuracy. Let the sequences $\{\lambda_k, \eta_k, \zeta_k, \alpha_k, C_k \}$, $k\geq 0$ be generated by Algorithm \ref{Alg:ASTM}. 
Then, if one of the following inequalities holds
\begin{align}
	&R^2/C_k \leq \e, \label{eq:ASTMStop1}\\
	&\vp(\eta_k) - \min_{\lambda \in \Lambda: V[\zeta_0](\lambda) \leq R^2} \left\{ \sum_{i=0}^k \frac{\alpha_{i}}{C_k} \left( \vp(\lambda_{i}) + \la \nabla \vp(\lambda_{i}), \lambda - \lambda_{i}\ra \right) \right\} \leq \e, \label{eq:ASTMStop2}
\end{align}
then 
	\begin{equation}
	\vp(\eta_k) - \min_{\lambda \in \Lambda} \vp(\lambda) \leq \e.
	\label{eq:ASTMConv3}
	\end{equation}	
\end{Cor}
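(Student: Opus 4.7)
The plan is to handle the two stopping criteria separately, since both reduce to already established facts.

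For the first criterion \eqref{eq:ASTMStop1}, the argument is immediate from Corollary \ref{Cor:ASTMConv2}. That corollary gives $\vp(\eta_k) - \min_{\lambda \in \Lambda} \vp(\lambda) \leq V[\zeta_0](\lambda_*)/C_k$, and by the hypothesis $V[\zeta_0](\lambda_*) \leq R^2$, combined with $R^2/C_k \leq \e$, the desired bound \eqref{eq:ASTMConv3} follows in one line.

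For the second criterion \eqref{eq:ASTMStop2}, the plan is to show that the inner minimum appearing in the criterion is itself an upper bound for $\min_{\lambda \in \Lambda} \vp(\lambda) = \vp(\lambda_*)$. The key observation is that $\lambda_*$ is feasible for the constrained minimization problem, since by hypothesis $V[\zeta_0](\lambda_*) \leq R^2$. Hence
\begin{equation*}
\min_{\lambda \in \Lambda : V[\zeta_0](\lambda) \leq R^2} \sum_{i=0}^k \frac{\alpha_{i}}{C_k} \left( \vp(\lambda_{i}) + \la \nabla \vp(\lambda_{i}), \lambda - \lambda_{i}\ra \right) \leq \sum_{i=0}^k \frac{\alpha_{i}}{C_k} \left( \vp(\lambda_{i}) + \la \nabla \vp(\lambda_{i}), \lambda_* - \lambda_{i}\ra \right).
\end{equation*}
By convexity of $\vp$, each summand is bounded above by $\frac{\alpha_i}{C_k}\vp(\lambda_*)$, and since $\sum_{i=0}^k \alpha_i = C_k$, the right-hand side collapses to $\vp(\lambda_*)$. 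Substituting this into \eqref{eq:ASTMStop2} yields \eqref{eq:ASTMConv3}.

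There is no real obstacle; the corollary is a routine verification that the two online stopping rules are valid surrogates for the target accuracy. The only subtlety worth flagging in the writeup is that the argument for \eqref{eq:ASTMStop2} relies on the inclusion $\lambda_* \in \{\lambda \in \Lambda : V[\zeta_0](\lambda) \leq R^2\}$, which is ensured precisely by the standing choice of $R$ in the statement, and on the fact that $\alpha_0 = 0$ so that the $i = 0$ term causes no issue even though $\nabla \vp(\lambda_0)$ is well-defined.
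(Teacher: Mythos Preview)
Your proposal is correct and follows essentially the same approach as the paper: for \eqref{eq:ASTMStop1} you invoke Corollary~\ref{Cor:ASTMConv2} together with $V[\zeta_0](\lambda_*)\leq R^2$, and for \eqref{eq:ASTMStop2} you observe that $\lambda_*$ is feasible for the constrained minimum and then apply convexity of $\vp$ term-by-term, exactly as in the paper's proof.
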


\begin{proof}
If the inequality \eqref{eq:ASTMStop1} holds, the statement of the Corollary follows from inequality $ V[\zeta_0](\lambda_*) \leq R^2$ Corollary \ref{Cor:ASTMConv2}.

Since $ V[\zeta_0](\lambda_*) \leq R^2$, the point $\lambda_*$ is a feasible point in the problem 
$$
\min_{\lambda \in \Lambda: V[\zeta_0](\lambda) \leq R^2} \left\{ \sum_{i=0}^k \frac{\alpha_{i}}{C_k} \left( \vp(\lambda_{i}) + \la \nabla \vp(\lambda_{i}), \lambda - \lambda_{i}\ra \right) \right\}.
$$
Then, by convexity of $\vp$, we obtain
\begin{align}
\min_{\lambda \in \Lambda: V[\zeta_0](\lambda) \leq R^2} \left\{ \sum_{i=0}^k \frac{\alpha_{i}}{C_k} \left( \vp(\lambda_{i}) + \la \nabla \vp(\lambda_{i}), \lambda - \lambda_{i}\ra \right) \right\} & \leq \sum_{i=0}^k \frac{\alpha_{i}}{C_k} \left( \vp(\lambda_{i}) + \la \nabla \vp(\lambda_{i}), \lambda_* - \lambda_{i}\ra \right) \notag \\
& \leq \vp(\lambda_*). \notag
\end{align}
This and \eqref{eq:ASTMStop2} finishes the proof.
\end{proof}

Let us now obtain the lower bound for the sequence $C_k$, $k \geq 0$, which will give the rate of convergence for Algorithm \ref{Alg:ASTM}.
\begin{Lm}
\label{Lm:CkGrowth}
Let the sequence $\{C_k \}$, $k\geq 0$ be generated by Algorithm \ref{Alg:ASTM}. Then,  for all $k \geq 1$ it holds that
\begin{equation}
C_k \geq \frac{(k+1)^2}{8L},
\label{eq:CkGrowth}
\end{equation}	
where $L$ is the Lipschitz constant for the gradient of $\vp$.
\end{Lm}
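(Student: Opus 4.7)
The plan is to combine the two defining identities of $C_{k+1}$ in Algorithm \ref{Alg:ASTM}, namely $C_{k+1} = M_k \alpha_{k+1}^2$ and $C_{k+1} = C_k + \alpha_{k+1}$, together with the bound $M_k \leq 2L$ already established in the proof of Theorem \ref{Th:ASTMConv}, to derive a lower bound on the increment $\sqrt{C_{k+1}}-\sqrt{C_k}$, which will then telescope.

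First I would eliminate $\alpha_{k+1}$: the two identities give $\alpha_{k+1} = C_{k+1} - C_k$ and $\alpha_{k+1} = \sqrt{C_{k+1}/M_k}$, whence
\[
(C_{k+1}-C_k)\sqrt{M_k} = \sqrt{C_{k+1}}.
\]
Using $M_k \leq 2L$ and factoring $C_{k+1}-C_k = (\sqrt{C_{k+1}}-\sqrt{C_k})(\sqrt{C_{k+1}}+\sqrt{C_k})$, this rearranges to
\[
\sqrt{C_{k+1}}-\sqrt{C_k} \;\geq\; \frac{1}{\sqrt{2L}} \cdot \frac{\sqrt{C_{k+1}}}{\sqrt{C_{k+1}}+\sqrt{C_k}} \;\geq\; \frac{1}{2\sqrt{2L}},
\]
where the final inequality uses $C_k \leq C_{k+1}$, so that the fraction is at least $1/2$.

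Next I would handle the base case. Since $C_0 = \alpha_0 = 0$, the equation \eqref{eq:alpQuadEq} for $k=0$ yields $\alpha_1 = 1/M_0 \geq 1/(2L)$, so $C_1 = \alpha_1 \geq 1/(2L)$, i.e.\ $\sqrt{C_1} \geq 1/\sqrt{2L} = (1+1)/(2\sqrt{2L})$. This matches the claimed bound at $k=1$ and furnishes the starting point for induction: assuming $\sqrt{C_k} \geq (k+1)/(2\sqrt{2L})$, the increment bound above gives $\sqrt{C_{k+1}} \geq (k+2)/(2\sqrt{2L})$, and squaring yields \eqref{eq:CkGrowth}.

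There is no real obstacle; the only delicate points are (i)\ justifying the use of $M_k \leq 2L$, which follows exactly as in the oracle-count argument in the proof of Theorem \ref{Th:ASTMConv} (since $M_k$ is at most twice the value that first satisfies \eqref{eq:lipConstCheck}, and any $M \geq L$ satisfies \eqref{eq:lipConstCheck} by \eqref{eq:nfLipDef}), and (ii)\ treating $k=1$ directly because the telescoping cannot start from $\sqrt{C_0}=0$ without losing a factor.
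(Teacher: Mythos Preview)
Your proof is correct. Both your argument and the paper's proceed by induction on $k$ using the bound $M_k \leq 2L$ (established in the oracle-count part of the proof of Theorem \ref{Th:ASTMConv}), but the mechanics differ. The paper solves the quadratic \eqref{eq:alpQuadEq} explicitly to obtain
\[
\alpha_{k+1} = \frac{1+\sqrt{1+4M_kC_k}}{2M_k} \;\geq\; \frac{1}{4L} + \sqrt{\frac{C_k}{2L}},
\]
and then checks directly that $C_k + \alpha_{k+1} \geq (k+2)^2/(8L)$ under the inductive hypothesis. You instead track $\sqrt{C_k}$: eliminating $\alpha_{k+1}$ from the two defining relations gives the one-step increment bound $\sqrt{C_{k+1}}-\sqrt{C_k} \geq 1/(2\sqrt{2L})$, which telescopes without ever invoking the quadratic formula. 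Your route is the standard device in many analyses of accelerated schemes and is arguably cleaner; the paper's version is more hands-on but equally short. Both land on exactly the same constant $1/(8L)$, and both need the base case $C_1 = 1/M_0 \geq 1/(2L)$ handled separately, as you note.
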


\begin{proof}
As we mentioned in the proof of Theorem \ref{Th:ASTMConv}, $M_k \leq 2L$, $k\geq 0$. For $k=1$, since $\alpha_0 = 0 $ and $A_1 = \alpha_0+\alpha_1= \alpha_1$, we have from \eqref{eq:alpQuadEq} 
	\begin{equation*}
		C_1 = \alpha_1 = \frac{1}{M_1} \geq \frac{1}{2L}.
	\end{equation*}
	Hence, \eqref{eq:CkGrowth} holds for $k = 1$.
	
	Let us now assume that \eqref{eq:CkGrowth} holds for some $k \geq 1$ and prove that it holds for $k+1$.
	From \eqref{eq:alpQuadEq}  we have a quadratic equation for $\alpha_{k+1}$
	\begin{equation*}
	M_{k}\alpha_{k+1}^2 - \alpha_{k+1} - C_k = 0.
	\end{equation*}
	Since we need to take the largest root, we obtain,
	\begin{align}
	\alpha_{k+1} & = \frac{1 + \sqrt{\uprule 1 + 4M_{k}C_{k}}}{2M_{k}} = \frac{1}{2M_{k}} + \sqrt{\frac{1}{4M_{k}^2} + \frac{C_{k}}{M_{k}}} \geq 
	\frac{1}{2M_{k}} + \sqrt{\frac{C_{k}}{M_{k}}} \notag \\
	&\geq
	\frac{1}{4L} + \frac{1}{\sqrt{2L}}\frac{k+1}{2\sqrt{2L}} =
	\frac{k+2}{4L}, \notag
	\end{align}
	where we used the induction assumption that \eqref{eq:CkGrowth} holds for $k$.
	Using the obtained inequality, from \eqref{eq:alpQuadEq} and \eqref{eq:CkGrowth} for $k$, we get
	\begin{equation*}
	C_{k+1} = C_k + \alpha_{k+1} \geq \frac{(k+1)^2}{8L} + \frac{k+2}{4L} \geq \frac{(k+2)^2}{8L}.
	\end{equation*}
\end{proof}

\begin{Cor}
\label{Cor:ASTMConv3}
Let the sequences $\{\lambda_k, \eta_k, \zeta_k\}$, $k\geq 0$ be generated by Algorithm \ref{Alg:ASTM}. Then,  for all $k \geq 1$, it holds that
	\begin{equation}
	\vp(\eta_k) - \min_{\lambda \in \Lambda} \vp(\lambda) \leq \frac{8LV[\zeta_0](\lambda_*)}{(k+1)^2} ,
	\label{eq:ASTMConv2}
	\end{equation}
	where $\lambda_*$ is the solution of $\min_{\lambda \in \Lambda} \vp(\lambda)$ s.t. $V[\zeta_0](\lambda_*)$ is minimal among all the solutions.
\end{Cor}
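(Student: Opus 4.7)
The plan is straightforward: this corollary is essentially an immediate consequence of the two results that directly precede it, namely Corollary \ref{Cor:ASTMConv2} and Lemma \ref{Lm:CkGrowth}. No new ideas are required, only a combination.

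First, I would invoke Corollary \ref{Cor:ASTMConv2}, which establishes
\[
\vp(\eta_k) - \min_{\lambda \in \Lambda} \vp(\lambda) \leq \frac{V[\zeta_0](\lambda_*)}{C_k},
\]
for $\lambda_*$ the minimizer of $V[\zeta_0](\cdot)$ among the solutions of $\min_{\lambda \in \Lambda} \vp(\lambda)$. This already reduces the task to a lower bound on $C_k$.

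Next, I would apply Lemma \ref{Lm:CkGrowth}, which gives $C_k \geq (k+1)^2/(8L)$ for all $k \geq 1$, where $L$ is the Lipschitz constant of $\nabla \vp$. Substituting this lower bound into the denominator of the previous estimate immediately yields
\[
\vp(\eta_k) - \min_{\lambda \in \Lambda} \vp(\lambda) \leq \frac{V[\zeta_0](\lambda_*)}{(k+1)^2/(8L)} = \frac{8L\,V[\zeta_0](\lambda_*)}{(k+1)^2},
\]
which is exactly the claimed bound.

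There is no real obstacle here; the only subtle point is to make sure the two hypotheses line up, in particular that the $\lambda_*$ appearing in Corollary \ref{Cor:ASTMConv2} (the minimizer of $V[\zeta_0]$ over the solution set) is the same $\lambda_*$ used in the statement of this corollary, and that the range $k \geq 1$ matches the domain of validity of Lemma \ref{Lm:CkGrowth}. Both are immediate from the statements as written, so the proof is just the two-step chain above.
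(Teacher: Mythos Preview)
Your proposal is correct and is exactly the intended argument: the paper itself leaves Corollary~\ref{Cor:ASTMConv3} without an explicit proof, since it follows immediately by plugging the lower bound $C_k \geq (k+1)^2/(8L)$ from Lemma~\ref{Lm:CkGrowth} into the estimate of Corollary~\ref{Cor:ASTMConv2}. There is nothing to add.
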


	%
%
	%
	%
	%
%
%
%
%
%
%
%
%
%
%
%
%
 
\section{Solving the Dual Problem by ASTM and Reconstructing a Primal Problem Solution}
\label{S:ASTM}

In this section, we return to the primal-dual pair of problems $(P_1)$-$(D_1)$. We apply Algorithm \ref{Alg:ASTM} to the problem $(P_2)$ and incorporate in the algorithm a procedure, which allows to reconstruct also an approximate solution of the problem $(P_1)$. 

\subsection{Setup}
We choose Euclidean proximal setup, which means that we introduce euclidean norm $\|\cdot \|_2$ in the space of vectors $\lambda$ and choose the prox-function $d(\lambda) = \frac12\|\lambda\|_2^2$. Then, we have $V[\zeta](\lambda) = \frac12\|\lambda-\zeta\|_2^2$.

\subsection{New Algorithm and Primal-Dual Analysis}
Our primal-dual algorithm for Problem $(P_1)$ is listed below as Algorithm \ref{Alg:PDASTM}. Note that, in this case, the set $\Lambda$ has a special structure
$$
\Lambda =\{\lambda = (\lambda^{(1)},\lambda^{(2)})^T \in H_1^* \times H_2^*: \lambda^{(2)} \in K^*\}
$$
as well as $\vp(\lambda)$ and $\nabla \vp(\lambda)$ are defined in \eqref{eq:vp_def} and \eqref{eq:nvp} respectively. Thus, the step \eqref{eq:PDzetakp1Def} of the algorithm can be written explicitly.
\begin{align}
				&\zeta_{k+1}^{(1)} = \zeta_{k}^{(1)} + \frac{1}{\alpha_{k+1}} (A_1x(\lambda_{k+1})-b_1), \notag \\
				&\zeta_{k+1}^{(2)} = \Pi_{K^*} \left(\zeta_{k}^{(2)} + \frac{1}{\alpha_{k+1}} (A_2x(\lambda_{k+1})-b_2)  \right), \notag  
\end{align}
where $\Pi_{K^*}(\cdot)$ denotes euclidean projection on the cone $K^*$.

It is worth noting that, besides solution of the problem \eqref{eq:inner}, the algorithm uses only matrix-vector multiplications and vector operations, which made it amenable for parallel implementation.
\begin{algorithm}[h!]
\caption{Primal-Dual Adaptive Similar Triangles Method (PDASTM)}
\label{Alg:PDASTM}
{\small
\begin{algorithmic}[1]
   \REQUIRE starting point $\lambda_0 = 0$, initial guess $L_0 >0$, accuracy $\tilde{\e}_f,\tilde{\e}_{eq},\tilde{\e}_{in} > 0$.
   \STATE Set $k=0$, $C_0=\alpha_0=0$, $\eta_0=\zeta_0=\lambda_0=0$.
   \REPEAT
			\STATE Set $M_k=L_k/2$.
			\REPEAT
				\STATE Set $M_k=2M_k$, find $\alpha_{k+1}$ as the largest root of the equation
				\begin{equation}
				C_{k+1}:=C_k+\alpha_{k+1} = M_k\alpha_{k+1}^2.
				\label{eq:PDalpQuadEq}
				\end{equation}
				\STATE Calculate 
				\begin{equation}
				\lambda_{k+1} = (\lambda_{k+1}^{(1)},\lambda_{k+1}^{(2)})^T = \frac{\alpha_{k+1}\zeta_k + C_k \eta_k}{C_{k+1}} .
				\label{eq:PDlambdakp1Def}
				\end{equation}
				\STATE Calculate 
				\begin{align}
				\zeta_{k+1}&=(\zeta_{k+1}^{(1)},\zeta_{k+1}^{(2)})^T  \notag \\
				&= \arg \min_{\lambda \in \Lambda} \left\{\frac12\|\lambda-\zeta_{k}\|_2^2 + \alpha_{k+1}(\vp(\lambda_{k+1}) + \langle \nabla \vp(\lambda_{k+1}), \lambda - \lambda_{k+1} \rangle) \right\}.
				\label{eq:PDzetakp1Def}
				\end{align}
				\STATE Calculate
				\begin{equation}
				\eta_{k+1} = (\eta_{k+1}^{(1)},\eta_{k+1}^{(2)})^T = \frac{\alpha_{k+1}\zeta_{k+1} + C_k \eta_k}{C_{k+1}}.
				\label{eq:PDetakp1Def}
				\end{equation}
			\UNTIL{
				\begin{equation}
					\vp(\eta_{k+1}) \leq \vp(\lambda_{k+1}) + \la \nabla \vp(\lambda_{k+1}) ,\eta_{k+1} - \lambda_{k+1} \ra  +\frac{M_k}{2}\|\eta_{k+1} - \lambda_{k+1}\|_2^2.
					\label{eq:PDlipConstCheck}
				\end{equation}
			}
			\STATE Set
				\begin{equation}
					\hat{x}_{k+1} = \frac{1}{C_{k+1}}\sum_{i=0}^{k+1} \alpha_i x(\lambda_i) = \frac{\alpha_{k+1}x(\lambda_{k+1})+C_k\hat{x}_{k}}{C_{k+1}}.
				\notag
				\end{equation}
			\STATE Set $L_{k+1}=M_k/2$, $k=k+1$.
  \UNTIL{$|f(\hat{x}_{k+1})+\vp(\eta_{k+1})| \leq \tilde{\e}_f$, $\|A_1\hat{x}_{k+1}-b_1\|_{2} \leq \tilde{\e}_{eq}$, $\rho(A_2\hat{x}_{k+1}-b_2,-K) \leq \tilde{\e}_{in}$.}
	\ENSURE The points $\hat{x}_{k+1}$, $\eta_{k+1}$.	
\end{algorithmic}
}
\end{algorithm}

\begin{Th}
Let the assumptions listed in Subsection \ref{S:main_assum} hold. Then Algorithm \ref{Alg:PDASTM} will stop not later than $k$ equals to
$$
\max \left\{ \left\lceil \sqrt{\frac{16L(R_1^2+R_2^2)}{\tilde{\e}_f}}  \right\rceil, \left\lceil \sqrt{\frac{16L(R_1^2+R_2^2)}{R_1\tilde{\e}_{eq}}} \right\rceil, \left\lceil \sqrt{\frac{16L(R_1^2+R_2^2)}{R_2\tilde{\e}_{in}}} \right\rceil \right\}. 
$$
Moreover, not later than $k$ equals to  
$$
\max \left\{ \left\lceil \sqrt{\frac{32L(R_1^2+R_2^2)}{\e_f}}  \right\rceil, \left\lceil \sqrt{\frac{16L(R_1^2+R_2^2)}{R_1\e_{eq}}} \right\rceil, \left\lceil \sqrt{\frac{16L(R_1^2+R_2^2)}{R_2\e_{in}}} \right\rceil \right\}, 
$$
the point $\hat{x}_{k+1}$ generated by Algorithm \ref{Alg:PDASTM} is an approximate solution to Problem $(P_1)$ in the sense of \eqref{eq:sol_def}.
\label{Th:PDASTMConv}
\end{Th}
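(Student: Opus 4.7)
The plan is to apply the pre-minimization bound \eqref{eq:ASTMConv} of Theorem \ref{Th:ASTMConv} to the dual objective $\vp$ and then translate the result into bounds on the primal residuals of the averaged iterate $\hat{x}_k = \frac{1}{C_k}\sum_{i=0}^{k}\alpha_i x(\lambda_i)$ that Algorithm \ref{Alg:PDASTM} constructs. Using the explicit formulas \eqref{eq:vp_def}--\eqref{eq:nvp}, a direct calculation gives
\[
\vp(\lambda_i) + \la \nabla\vp(\lambda_i), \lambda - \lambda_i\ra = -f(x(\lambda_i)) + \la b_1 - A_1 x(\lambda_i), \lambda^{(1)}\ra + \la b_2 - A_2 x(\lambda_i), \lambda^{(2)}\ra.
\]
Substituting this into \eqref{eq:ASTMConv}, applying convexity of $f$ against the weights $\alpha_i/C_k$, and using the Euclidean setup with $\zeta_0 = 0$ and $V[\zeta_0](\lambda) = \tfrac{1}{2}\|\lambda\|_2^2$ yields the key primal-dual estimate
\[
\vp(\eta_k) + f(\hat{x}_k) \leq \min_{\lambda \in \Lambda}\left\{\la b_1 - A_1\hat{x}_k, \lambda^{(1)}\ra + \la b_2 - A_2\hat{x}_k, \lambda^{(2)}\ra + \frac{1}{2C_k}\|\lambda\|_2^2\right\}.
\]

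From here I would extract two consequences. Writing $r_1 = A_1\hat{x}_k - b_1$ and $\rho = \rho(A_2\hat{x}_k - b_2,-K)$, the choice $\lambda = 0 \in \Lambda$ gives $\vp(\eta_k) + f(\hat{x}_k) \leq 0$, while the choice $\lambda^{(1)} = C_k r_1$ and $\lambda^{(2)} = C_k\rho\,\bar v_2$, where $\bar v_2 \in K^*$ with $\|\bar v_2\|_2 \leq 1$ attains the supremum defining $\rho$, reduces the right-hand side to two independent one-dimensional quadratics and produces
\[
\vp(\eta_k) + f(\hat{x}_k) \leq -\tfrac{C_k}{2}\left(\|r_1\|_2^2 + \rho^2\right).
\]
In parallel, I would evaluate the Lagrangian at $\hat{x}_k$ against the dual optimizer $\lambda^*$; Cauchy--Schwarz on $\la \lambda^{*(1)}, r_1\ra$ together with $\la \lambda^{*(2)}, A_2\hat{x}_k - b_2\ra \leq R_2 \rho$ (valid because $\lambda^{*(2)}\in K^*$ and $\|\lambda^{*(2)}\|_2 \leq R_2$), followed by weak and strong duality for the convex problem with linear equality and conic inequality constraints, yields
\[
f(\hat{x}_k) - Opt[P_1] \geq -R_1 \|r_1\|_2 - R_2 \rho.
\]

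Combining the previous two displays with $\vp(\eta_k) \geq \vp(\lambda^*) = -Opt[P_1]$, which turns $\vp(\eta_k) + f(\hat{x}_k) \leq 0$ into $f(\hat{x}_k) \leq Opt[P_1]$ and $f(\hat{x}_k) + \vp(\eta_k) \geq f(\hat{x}_k) - Opt[P_1]$, and invoking Cauchy--Schwarz,
\[
\tfrac{C_k}{2}\left(\|r_1\|_2^2 + \rho^2\right) \leq R_1\|r_1\|_2 + R_2\rho \leq \sqrt{R_1^2+R_2^2}\,\sqrt{\|r_1\|_2^2 + \rho^2},
\]
gives $\|r_1\|_2, \rho \leq 2\sqrt{R_1^2+R_2^2}/C_k$, and then $|f(\hat{x}_k) - Opt[P_1]|$ and $|f(\hat{x}_k) + \vp(\eta_k)|$ are both at most $2(R_1^2+R_2^2)/C_k$. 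Finally, Lemma \ref{Lm:CkGrowth} provides $C_k \geq (k+1)^2/(8L)$, so each of the three residuals decays as $O(L(R_1^2+R_2^2)/(k+1)^2)$; inverting these inequalities for the tolerances $\tilde\e_f,\tilde\e_{eq},\tilde\e_{in}$ and taking the maximum of the three candidate iteration counts bounds the stopping time, and the identical computation with $\e_f,\e_{eq},\e_{in}$ in place of the tilded quantities certifies that $\hat{x}_{k+1}$ is an $(\e_f,\e_{eq},\e_{in})$-solution after the stated number of steps. The main obstacle is the careful treatment of the conic constraint via the support function $\rho$ when choosing the test $\lambda^{(2)}\in K^*$ in the min on the right-hand side of the key estimate, together with the bookkeeping that matches each residual to its prescribed constant; strong duality is taken for granted from the assumed existence of a bounded dual optimum.
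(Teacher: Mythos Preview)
Your argument is correct and follows the same broad strategy as the paper (linearize $\vp$ via \eqref{eq:ASTMConv}, rewrite each summand using \eqref{eq:vp_def}--\eqref{eq:nvp}, average with convexity of $f$, then combine with the Lagrangian lower bound $f(\hat{x}_k)\geq Opt[P_1]-R_1\|r_1\|_2-R_2\rho$), but the key step is handled differently. The paper restricts the minimization in \eqref{eq:ASTMConv} to the compact set $\Lambda_R=\{\lambda:\lambda^{(2)}\in K^*,\ \|\lambda^{(1)}\|_2\leq 2R_1,\ \|\lambda^{(2)}\|_2\leq 2R_2\}$, bounds $\tfrac12\|\lambda\|_2^2\leq 2(R_1^2+R_2^2)$ on $\Lambda_R$, and then maximizes the remaining linear form over $\Lambda_R$ to obtain $\vp(\eta_k)+f(\hat x_k)+2R_1\|r_1\|_2+2R_2\rho\leq 2(R_1^2+R_2^2)/C_k$. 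You instead plug in the explicit test point $\lambda=(C_kr_1,\,C_k\rho\,\bar v_2)$, which yields the sharper quadratic inequality $\vp(\eta_k)+f(\hat x_k)\leq -\tfrac{C_k}{2}(\|r_1\|_2^2+\rho^2)$ and, after combining with the Lagrangian bound and Cauchy--Schwarz, the tighter estimates $\|r_1\|_2,\rho\leq 2\sqrt{R_1^2+R_2^2}/C_k$ (versus the paper's $2(R_1^2+R_2^2)/(C_kR_i)$). Your constants therefore do not match the theorem's displayed bounds literally, but since $\sqrt{R_1^2+R_2^2}\leq (R_1^2+R_2^2)/R_i$, your bounds imply them a fortiori; you should say this explicitly when ``inverting'' to recover the stated iteration counts.

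Two minor remarks. First, you invoke strong duality (``$\vp(\lambda^*)=-Opt[P_1]$''), but only $\vp(\eta_k)\geq -Opt[P_1]$ is needed, and this follows from weak duality \eqref{eq:wD} together with $\vp(\eta_k)\geq Opt[P_2]=-Opt[D_1]$; the paper works only with weak duality. Second, your treatment of the conic part is clean: since $\rho\geq 0$ (take $\lambda^{(2)}=0$ in its definition) and $\bar v_2\in K^*$, the test point $C_k\rho\,\bar v_2$ indeed lies in $K^*$, and $\|\bar v_2\|_2\leq 1$ gives the claimed quadratic value; this is the only delicate point and you handle it correctly.
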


\begin{proof}
The proof mostly follows the steps of our previous work \cite{chernov2016fast}, but we give the proof for the reader's convenience. From Theorem \ref{Th:ASTMConv} with specific choice of the Bregman divergence, since $\zeta_0=0$, we have, for all $k\geq 0$,  
\begin{equation}
C_k \vp(\eta_k) \leq \min_{\lambda \in \Lambda} \left\{  \sum_{i=0}^k{\alpha_i \left( \vp(\lambda_i) + \la \nabla \vp(\lambda_i), \lambda-\lambda_i \ra \right) } + \frac{1}{2} \|\lambda\|_2^2 \right\}
\label{eq:FGM_compl}
\end{equation}
Let us introduce a set $\Lambda_R =\{\lambda = (\lambda^{(1)},\lambda^{(2)})^T: \lambda^{(2)} \in K^*, \|\lambda^{(1)}\|_2 \leq 2R_1, \|\lambda^{(2)}\|_2 \leq 2R_2 \}$ where $R_1$, $R_2$ are given in \eqref{eq:l_bound}. Then, from \eqref{eq:FGM_compl}, we obtain
\begin{align}
C_k \vp(\eta_k) &\leq \min_{\lambda \in \Lambda} \left\{  \sum_{i=0}^k{\alpha_i \left( \vp(\lambda_i) + \la \nabla \vp(\lambda_i), \lambda-\lambda_i \ra \right) } + \frac{1}{2} \|\lambda\|_2^2 \right\} \ \notag \\
&\leq \min_{\lambda \in \Lambda_R} \left\{  \sum_{i=0}^k{\alpha_i \left( \vp(\lambda_i) + \la \nabla \vp(\lambda_i), \lambda-\lambda_i \ra \right) } + \frac{1}{2} \|\lambda\|_2^2 \right\}  \notag \\
&\leq \min_{\lambda \in \Lambda_R} \left\{  \sum_{i=0}^k{\alpha_i \left( \vp(\lambda_i) + \la \nabla \vp(\lambda_i), \lambda-\lambda_i \ra \right) } \right\} + 2(R_1^2+R_2^2).
\label{eq:proof_st_1}
\end{align}
On the other hand, from the definition \eqref{eq:vp_def} of $\vp(\lambda)$, we have
\begin{align}
 \vp(\lambda_i) & = \vp(\lambda_i^{(1)},\lambda_i^{(2)}) = \la \lambda_i^{(1)}, b_1 \ra + \la \lambda_i^{(2)}, b_2 \ra \notag \\
& \hspace{1em} + \max_{x\in Q} \left( -f(x) - \la A_1^T \lambda_i^{(1)} + A_2^T \lambda_i^{(2)} ,x \ra \right) \notag \\
& = \la \lambda_i^{(1)}, b_1 \ra + \la \lambda_i^{(2)}, b_2 \ra - f(x(\lambda_i)) - \la A_1^T \lambda_i^{(1)} + A_2^T \lambda_i^{(2)} ,x(\lambda_i) \ra . \notag
\end{align}
Combining this equality with \eqref{eq:nvp}, we obtain
\begin{align}
\vp(\lambda_i) - \la \nabla \vp (\lambda_i), \lambda_i \ra &= \vp(\lambda_i^{(1)},\lambda_i^{(2)}) - \la \nabla \vp(\lambda_i^{(1)},\lambda_i^{(2)}), (\lambda_i^{(1)},\lambda_i^{(2)})^T \ra = \notag \\
& = \la \lambda_i^{(1)}, b_1 \ra + \la \lambda_i^{(2)}, b_2 \ra - f(x(\lambda_i)) - \la A_1^T \lambda_i^{(1)} + A_2^T \lambda_i^{(2)} ,x(\lambda_i) \ra \notag \\
& \hspace{1em} - \la b_1-A_1 x(\lambda_i),\lambda_i^{(1)} \ra - \la b_2-A_2 x(\lambda_i),\lambda_i^{(2)} \ra = - f(x(\lambda_i)). \notag
\end{align}
Summing these inequalities from $i=0$ to $i=k$ with the weights $\{\alpha_i\}_{i=1,...k}$, we get, using the convexity of $f$
\begin{align}
&  \sum_{i=0}^k{\alpha_i \left( \vp(\lambda_i) + \la \nabla \vp(\lambda_i), \lambda-\lambda_i \ra \right) }  \notag \\
& = -\sum_{i=0}^k \alpha_i f(x(\lambda_i)) + \sum_{i=0}^k \alpha_i \la (b_1-A_1 x(\lambda_i),b_2-A_2 x(\lambda_i))^T, (\lambda^{(1)},\lambda^{(2)})^T \ra  \notag \\
& \leq -C_kf(\hat{x}_k) + C_k \la (b_1-A_1 \hat{x}_k,b_2-A_2 \hat{x}_k)^T, (\lambda^{(1)},\lambda^{(2)})^T \ra . \notag
\end{align}
Substituting this inequality to \eqref{eq:proof_st_1}, we obtain
\begin{align}
C_k \vp(\eta_k)  \leq &-C_kf(\hat{x}_k)  \notag \\
& + C_k \min_{\lambda \in \Lambda_R} \left\{  \la (b_1-A_1 \hat{x}_k,b_2-A_2 \hat{x}_k)^T, (\lambda^{(1)},\lambda^{(2)})^T \ra \right\} + 2(R_1^2+R_2^2) . \notag
\end{align}
Finally, since 
\begin{align}
&\max_{\lambda \in \Lambda_R} \left\{  \la (-b_1+A_1 \hat{x}_k,-b_2+A_2 \hat{x}_k)^T, (\lambda^{(1)},\lambda^{(2)})^T \ra \right\}  \notag \\
&\hspace{3em} =2 R_1 \|A_1 \hat{x}_k - b_1 \|_2 + 2R_2 \rho(A_2\hat{x}_{k}-b_2,-K), \notag
\end{align} 
we obtain
\begin{equation}
\vp(\eta_k) + f(\hat{x}_k) +2 R_1 \|A_1 \hat{x}_k - b_1 \|_2 + 2R_2 \rho(A_2\hat{x}_{k}-b_2,-K)   \leq \frac{2(R_1^2+R_2^2)}{C_k}.
\label{eq:vpmfxh}
\end{equation}
Since  $\lambda^*=(\lambda^{*(1)}, \lambda^{*(2)})^T$ is an optimal solution of Problem $(D_1)$, we have, for any $x \in Q$
$$
Opt[P_1]\leq f(x) + \la  \lambda^{*(1)}, A_1 x -b_1 \ra + \la \lambda^{*(2)}, A_2 x -b_2 \ra.
$$
Using the assumption \eqref{eq:l_bound} and that $\lambda^{*(2)} \in K^*$, we get
\begin{equation}
f(\hat{x}_k) \geq Opt[P_1]- R_1 \|A_1 \hat{x}_k - b_1 \|_2- R_2 \rho(A_2\hat{x}_{k}-b_2,-K) .
\label{eq:fxhat_est}
\end{equation}
Hence,
\begin{align}
 \vp(\eta_k) + f(\hat{x}_k)  & = \vp(\eta_k) - Opt[P_2]+Opt[P_2] + Opt[P_1]  - Opt[P_1] + f(\hat{x}_k)  \notag \\
& \stackrel{\eqref{eq:D_1_P_2_sol}}{=}\vp(\eta_k)  - Opt[P_2]-Opt[D_1]+Opt[P_1]  - Opt[P_1] + f(\hat{x}_k)  
 \notag \\
&  \stackrel{\eqref{eq:wD}}{\geq}  - Opt[P_1] + f(\hat{x}_k) \stackrel{\eqref{eq:fxhat_est}}{\geq} - R_1 \|A_1 \hat{x}_k - b_1 \|_2- R_2 \rho(A_2\hat{x}_{k}-b_2,-K).
\label{eq:aux1}
\end{align}
This and \eqref{eq:vpmfxh} give
\begin{equation}
R_1 \|A_1 \hat{x}_k - b_1 \|_2 + R_2 \rho(A_2\hat{x}_{k}-b_2,-K)  \leq \frac{2(R_1^2+R_2^2)}{C_k}.
\label{eq:R_norm_est}
\end{equation}
Hence, we obtain
\begin{equation}
\vp(\eta_k) + f(\hat{x}_k) \stackrel{\eqref{eq:aux1},\eqref{eq:R_norm_est}}{\geq} - \frac{2(R_1^2+R_2^2)}{C_k}.
\label{eq:vppfxhatgeq}
\end{equation}
On the other hand, we have 
\begin{equation}
\vp(\eta_k) + f(\hat{x}_k) \stackrel{\eqref{eq:vpmfxh}}{\leq} \frac{2(R_1^2+R_2^2)}{C_k}.
\label{eq:vppfxhatleq}
\end{equation}
Combining \eqref{eq:R_norm_est}, \eqref{eq:vppfxhatgeq}, \eqref{eq:vppfxhatleq}, we conclude
\begin{align}
&\|A_1 \hat{x}_k - b_1 \|_2 \leq \frac{2(R_1^2+R_2^2)}{C_kR_1}, \notag \\
&\rho(A_2\hat{x}_{k}-b_2,-K)  \leq \frac{2(R_1^2+R_2^2)}{C_kR_2 }, \notag \\
&|\vp(\eta_k) + f(\hat{x}_k)| \leq \frac{2(R_1^2+R_2^2)}{C_k}.
\label{eq:untileq}
\end{align}
From Lemma \ref{Lm:CkGrowth}, for any $k\geq 0$, $C_k=\frac{(k+1)^2}{8L}$.
Hence, in accordance to \eqref{eq:untileq}, when the iteration counter $k$ is equal to the number given in the theorem statement the stopping criterion fulfills and Algorithm \ref{Alg:PDASTM} stops.

Now let us prove the second statement of the theorem. 
We have 
\begin{align}
 \vp(\eta_k) + Opt[P_1] & = \vp(\eta_k)  - Opt[P_2]+Opt[P_2]+Opt[P_1] \notag \\
& \stackrel{\eqref{eq:D_1_P_2_sol}}{=}  \vp(\eta_k)  - Opt[P_2]-Opt[D_1]+Opt[P_1] \stackrel{\eqref{eq:wD}}{\geq} 0. 
\notag
\end{align} 
Hence,
\begin{equation}
f(\hat{x}_k)- Opt[P_1] \leq f(\hat{x}_k) + \vp(\eta_k).
\label{eq:fxhatmopt}
\end{equation}
On the other hand, 
\begin{equation}
f(\hat{x}_k) - Opt[P_1] \stackrel{\eqref{eq:fxhat_est}}{\geq}  - R_1 \|A_1 \hat{x}_k - b_1 \|_2- R_2 \rho(A_2\hat{x}_{k}-b_2,-K) .
\label{eq:fxhat_est1}
\end{equation}
Note that, since the point $\hat{x}_k$ may not satisfy the linear constraints, one can not guarantee that $f(\hat{x}_k) - Opt[P_1] \geq 0$.
From \eqref{eq:fxhatmopt}, \eqref{eq:fxhat_est1} we can see that if we set $\tilde{\e}_f = \e_f$, $\tilde{\e}_{eq} = \min\{\frac{\e_f}{2R_1},\e_{eq}\}$, $\tilde{\e}_{in} = \min\{\frac{\e_f}{2R_2},\e_{in}\}$ and run Algorithm \ref{Alg:PDASTM} for the number of iterations given in the theorem statement, we obtain that \eqref{eq:sol_def} fulfills and $\hat{x}_k$ is an approximate solution to Problem $(P_1)$ in the sense of \eqref{eq:sol_def}. 
\end{proof}

\section{Numerical Experiments}
\label{S:Num}
In this section, we focus on the problem \eqref{eq:ROT}, which is motivated by important applications to traffic demand matrix estimation, \cite{wilson2011entropy}, and regularized optimal transport calculation, \cite{cuturi2013sinkhorn}. We provide the results of our numerical experiments, which were performed on a PC with processor Intel Core i5-2410 2.3 GHz and 4 GB of RAM using pure Python 2.7 (without C code) under managing OS Ubuntu 14.04 (64-bits). Numpy.float128 data type with precision $1e-18$  and with max element $\approx 1.19e+4932$ was used. No parallel computations were used.

\subsection{Setting}
We compare the performance of our algorithm with Sinkhorn's-method-based approach of \cite{cuturi2013sinkhorn}, which is the state-of-the art method for problem \eqref{eq:ROT}. We use two types of cost matrix $C$ and three types of vectors $\mu$ and $\nu$. 

\textbf{Cost matrix $C$.} The first type of the cost matrix $C$ is usually used in optimal transport problems and corresponds to 2-Wasserstein distance. Assume that we need to calculate this distance between two discrete measures $\mu,\nu$ with finite support of size $p$. Then, the element $c_{ij}$ of the matrix $C$ is equal to Euclidean distance between the $i$-th point in the support of the measure $\mu$ and $j$-th point in the support of the measure $\nu$. We will refer to this choice of the cost matrix as \textit{Euclidean cost}.
The second type the cost matrix $C$ comes from traffic matrix estimation problem. Let's consider a road network of Manhattan type, i.e. districts present a $m \times m$ grid. We build a $m^2$ by $m^2$ matrix $D$ of pairwise Euclidian distances processing the grid rows one by one and calculating euclidean distances from the current grid element to all the others elements of the grid.
Then, as it suggested in \cite{shvetsov2003mathematical}, we form the cost matrix $C$ as $C = \exp (-0.065 D)$, where the exponent is taken elementwise. We will refer to this choice of the cost matrix as \textit{Exp-Euclidean cost}.

To set a natural scale for the regularization parameter $\gamma$, we normalize in each case the matrix $C$ dividing all its elements by the average of all elements.

\textbf{Vectors $\mu$ and $\nu$.}
The first type of vectors $\mu$ and $\nu$ is \textit{normalized uniform random}. Each element of each vector is taken independently from the uniform distribution on $[0,1]$ and then each vector is normalized so that each sums to 1, i.e. 
%

The second type of vectors is \textit{random images}. The first $p/2$ elements of $\mu$ are normalized uniform random and the second $p/2$ elements are zero. For $\nu$ the situation is the opposite, i.e. the first $p/2$ elements are zero, and the second $p/2$ elements are normalized uniform random. In our preliminary experiments we found that the methods behave strange on vectors representing pictures from MNIST database (see below). We supposed that the reason is that these vectors have many zero elements and decided to include the described random images to the experiments setting.
%
%
Finally, the third type are vectors of intensities of \textit{images} of handwritten digits from MNIST dataset. The size of each image is 28 by 28 pixels. Each image is converted to gray scale from 0 to 1 where 0 corresponds to black color and 1 corresponds to white, then each image is reshaped to a vector of length 784. In our experiments, we normalize these vectors to sum to 1. The elements of MNIST look like:

\begin{figure}[H]
	\centering
        \begin{subfigure}[b]{0.08\textwidth}
                \includegraphics[width=\linewidth]{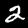}
        \end{subfigure}%
        \begin{subfigure}[b]{0.08\textwidth}
                \includegraphics[width=\linewidth]{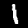}
        \end{subfigure}%
        \begin{subfigure}[b]{0.08\textwidth}
                \includegraphics[width=\linewidth]{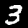}
        \end{subfigure}%
        \begin{subfigure}[b]{0.08\textwidth}
                \includegraphics[width=\linewidth]{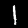}
        \end{subfigure}%
        \begin{subfigure}[b]{0.08\textwidth}
                \includegraphics[width=\linewidth]{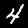}
        \end{subfigure}%
        \caption{Examples of images in MNIST dataset.}
				\label{fig:mnist}
\end{figure}

\textbf{Accuracy.}
We slightly redefine the accuracy of the solution and use relative accuracy with respect to the starting point, i.e.
$$
\tilde{\e}_f = [\text{Accuracy}] \cdot |f(x(\lambda_0))+\vp(\eta_0)|, \quad  \tilde{\e}_{eq} = [\text{Accuracy}] \cdot \|A_1x(\lambda_0)-b_1\|_2,
$$
where we used the fact that $\lambda_0=\eta_0=0$ and there are no cone constraints in \eqref{eq:ROT}.

\subsection{Preliminary Experiments}
\textbf{Adaptive vs non-adaptive algorithm.} First, we show that the adaptivity of our algorithm with respect to the Lipschitz constant of the gradient of $\vp$ leads to faster convergence in practice. For this purpose, we use normalized uniform random vectors $\mu$ and $\nu$ and both types of cost matrix $C$. We compare our new Algorithm \ref{Alg:PDASTM} with non-adaptive Similar Triangles Method (STM), which has cheaper iteration than the existing non-adaptive methods \cite{patrascu2015rate,gasnikov2016efficient,chernov2016fast,li2016inexact}.
We choose $m = 10$, and, hence, $p$ = 100, Accuracy is 0.05. For the Exp-Euclidean cost matrix $C$, we use $\gamma \in \{0.1, 0.2, 0.3, 0.4, 0.5\}$, and, for Euclidean cost matrix $C$, we use $\gamma \in \{0.02, 0.1, 0.2, 0.3, 0.4, 0.5\}$. The results are shown in Figure \ref{fig:PDASTMvsSTM}. In both cases our new Algorithm \ref{Alg:PDASTM} is much faster than the STM. This effect was observed for other parameter values, so, in the following experiments, we consider PDASTM.

\begin{figure}[H]
	\centering
	\begin{subfigure}[b]{0.4\textwidth}
		\includegraphics[width=\textwidth]{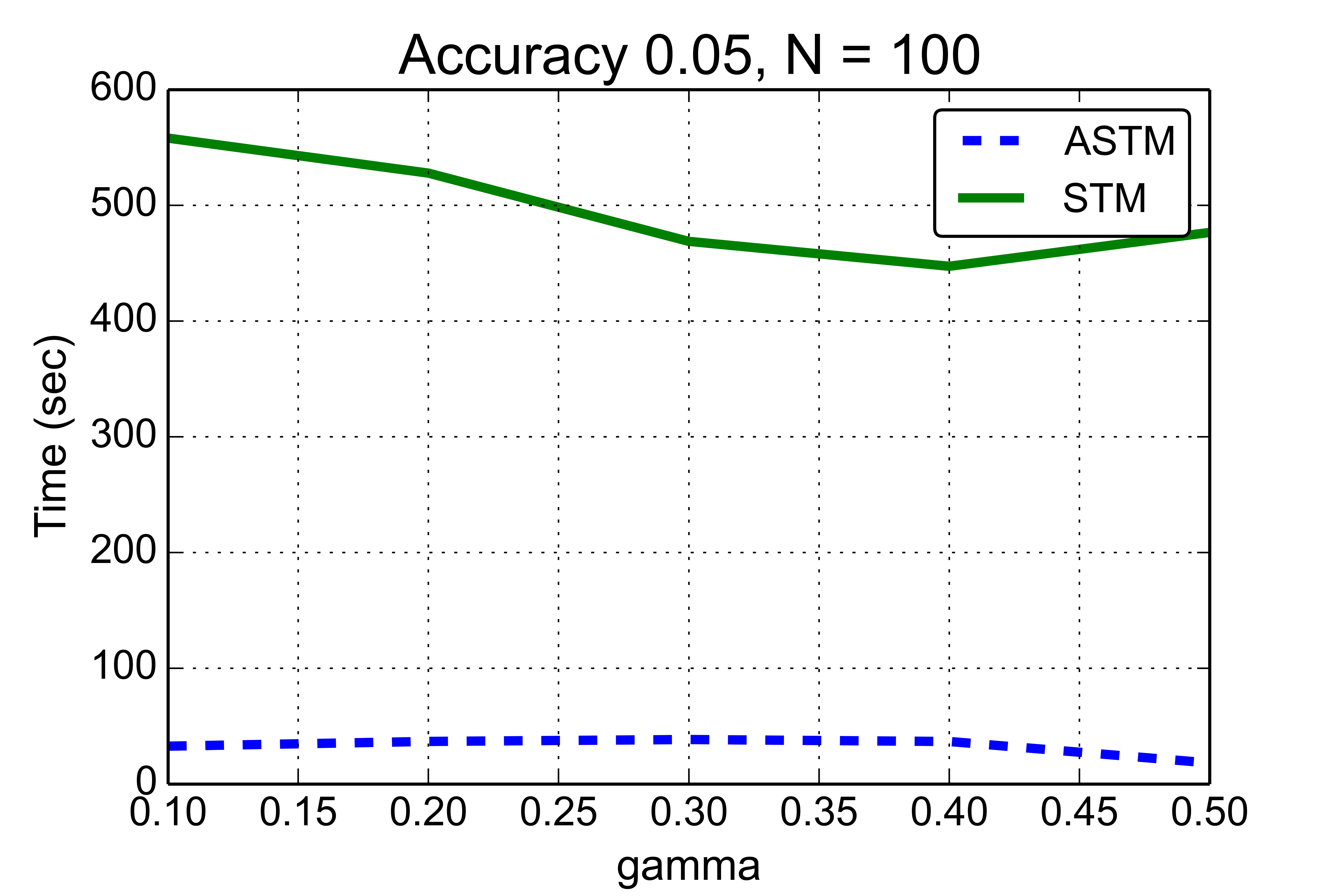}
	\end{subfigure}
	\begin{subfigure}[b]{0.4\textwidth}
		\includegraphics[width=\textwidth]{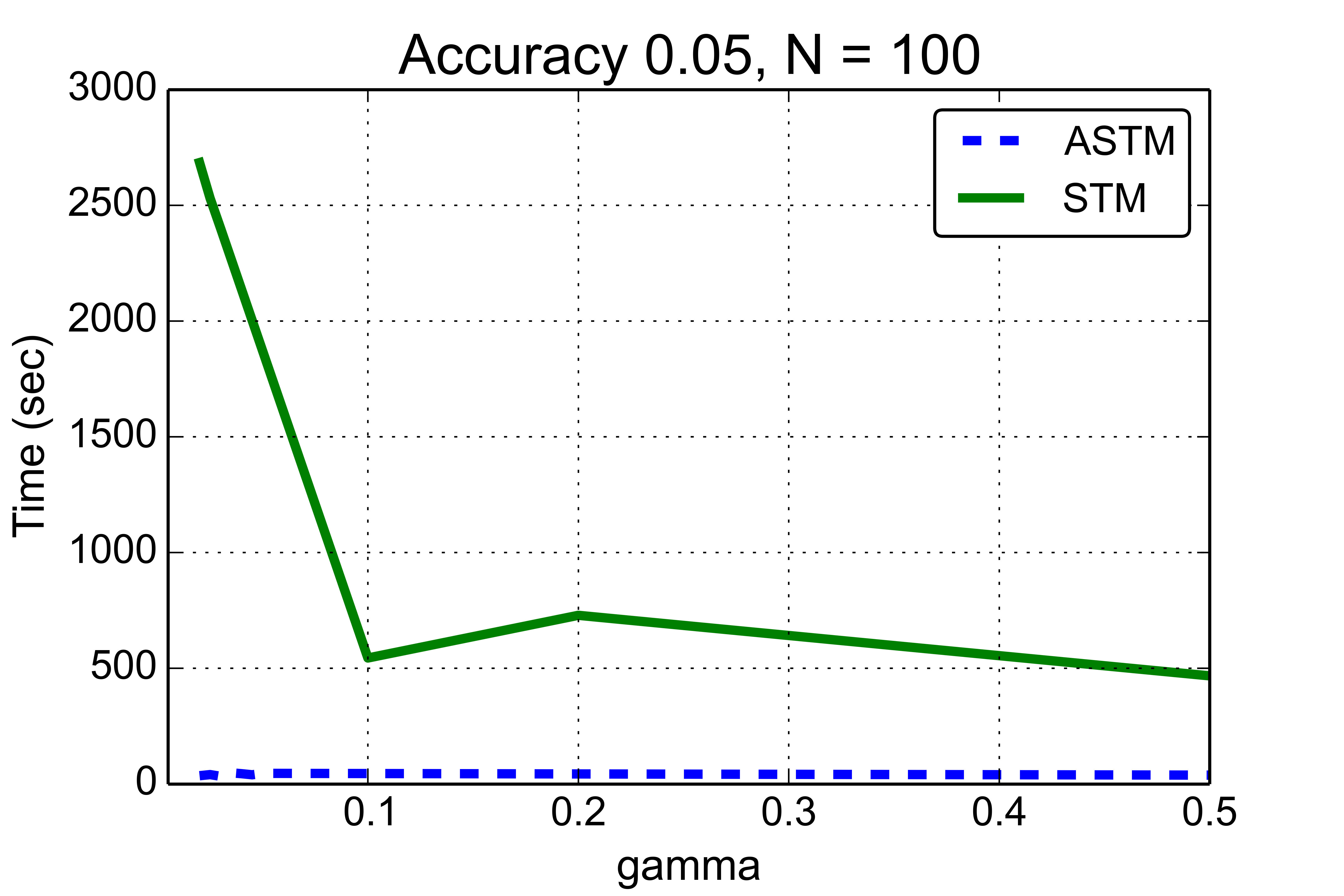}
	\end{subfigure}
	\caption{The perfomance of PDASTM vs STM, Accuracy 0.05, Exp-Euclidean $C$ (left) and Euclidean $C$ (right).}
	\label{fig:PDASTMvsSTM}
\end{figure}

\textbf{Warm start.} 
During our experiments on the images from  MNIST dataset PDASTM worked worse than on the normalized uniform random vectors. Possible reason is the large number of zero elements in the former vectors (a lot of black pixels). So we decided to test the performance of the algorithms on the random images vectors $\mu$ and $\nu$. Also we decided to apply the idea of warm start to force PDASTM to converge faster. As we know, Sinkhorn's method works very fast when $\gamma$ is relatively large. Thus, we use it in this regime to find a good starting point for the PDASTM for the problem with small $\gamma$. Notably, the running time of Sinkhorn's method is small in comparison with time of ASTM running.
We test the performance of PDASTM versus PDASTM with warm start on problems with Exp-Euclidean matrix $C$ and $\gamma \in \{0.001, 0.003, 0.005, 0.008, 0.01\}$ and on problems with Euclidean matrix $C$ and $\gamma \in \{0.005, 0.01, 0.015, 0.02, 0.025\}$. The results are in Figure \ref{fig:PDASTMvsPDASTMwWS}. Other parametersare stated in the figure. The experiments were run 7 times, the results were averaged. 
As we can see, warm start accelerates the PDASTM. Similar results were observed in other experiments, so, we made the final comparison between the Sinkhorn's method and PDASTM with warm start. 

\begin{figure}[H]
	\centering
	\begin{subfigure}[b]{0.4\textwidth}
		\includegraphics[width=\textwidth]{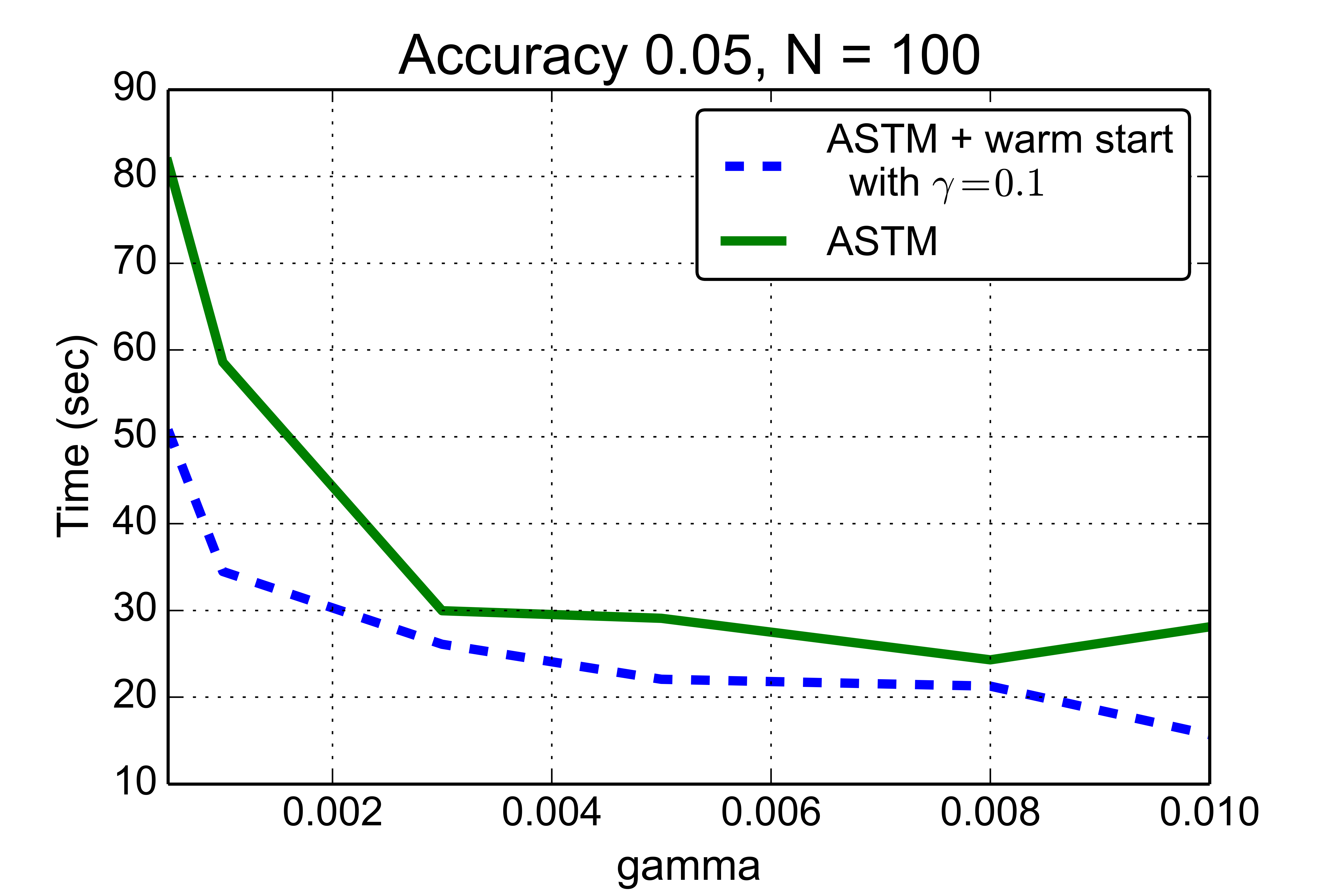}
	\end{subfigure}
	\begin{subfigure}[b]{0.4\textwidth}
		\includegraphics[width=\textwidth]{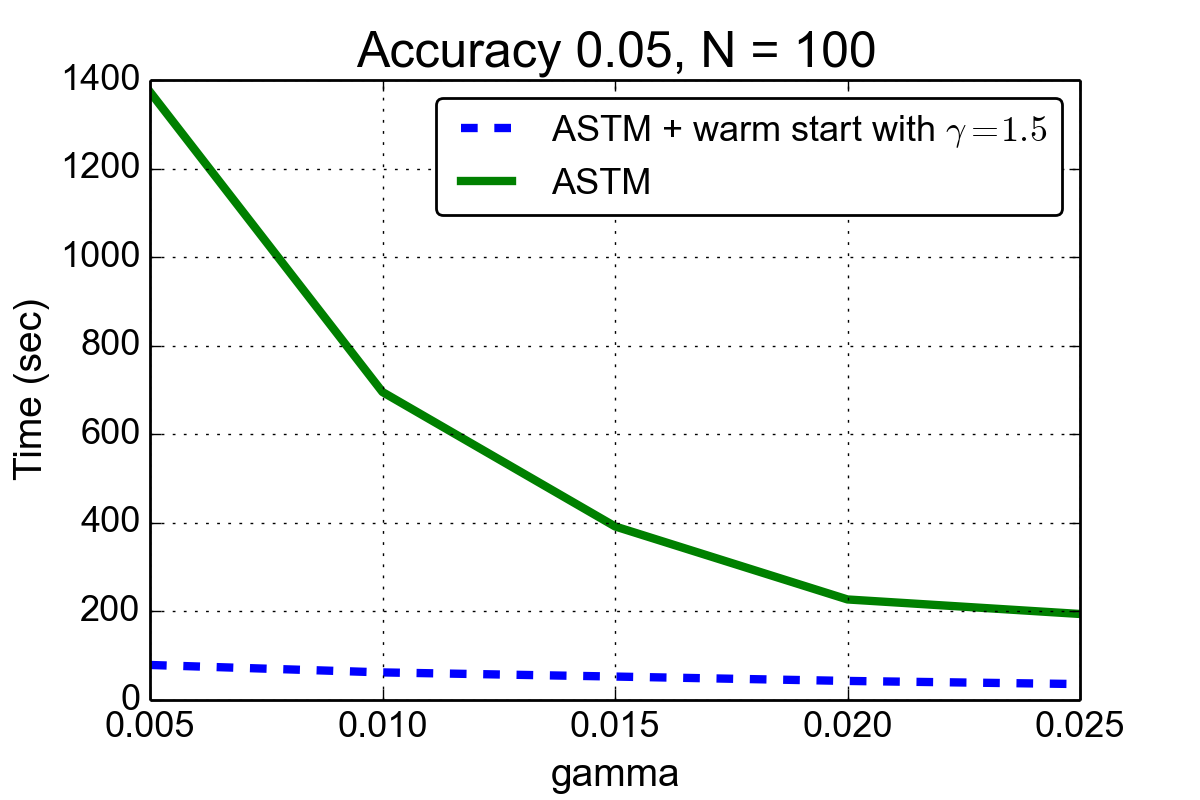}
	\end{subfigure}
	\caption{The perfomance of PDASTM vs PDASTM with warm start, Accuracy 0.05, Exp-Euclidean $C$ (left) and Euclidean $C$ (right).}
		\label{fig:PDASTMvsPDASTMwWS}
\end{figure}

\subsection{Sinkhorn's Method vs PDASTM with Warm Start}
First we compare Sinkhorn's method and PDASTM with warm start on the problem with normalized uniform random vectors $\mu$, $\nu$ and Euclidean cost matrix $C$ with different values of $p \in \{100, 196, 289, 400\}$, $\text{Accuracy} \in \{0.01, 0.05, 0.1\}$, and $\gamma \in [0.005; 0.025]$. On each graph we point the value of $\gamma$ used for generating a starting point for PDASTM with warm start by Sinkhorn's method. Each experiments was run 5 times and then the results were averaged. The results are shown on the Figures \ref{fig:varwn_astm_01}, \ref{fig:varwn_astm_005}, \ref{fig:varwn_astm_001}.

\begin{figure}[H]	
	\centering
	\begin{subfigure}[b]{0.4\textwidth}
		\includegraphics[width=\textwidth]{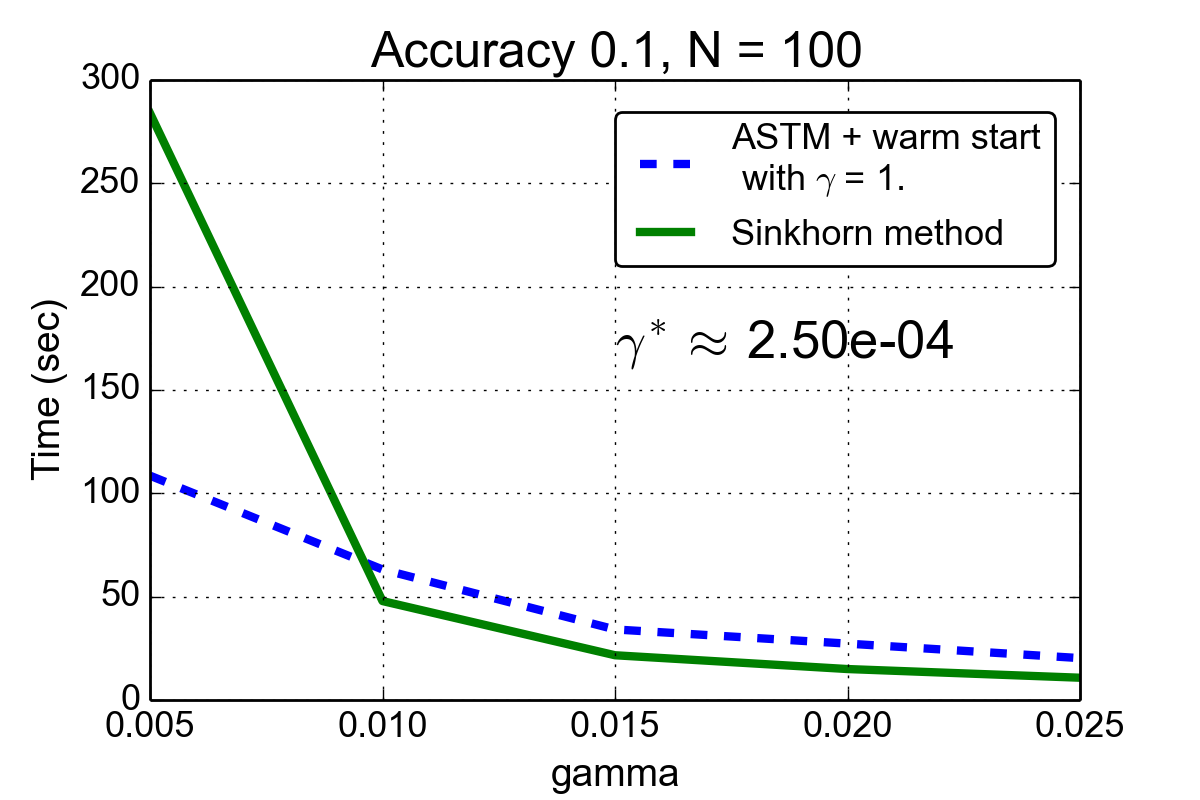}
	\end{subfigure}
	\begin{subfigure}[b]{0.4\textwidth}
		\includegraphics[width=\textwidth]{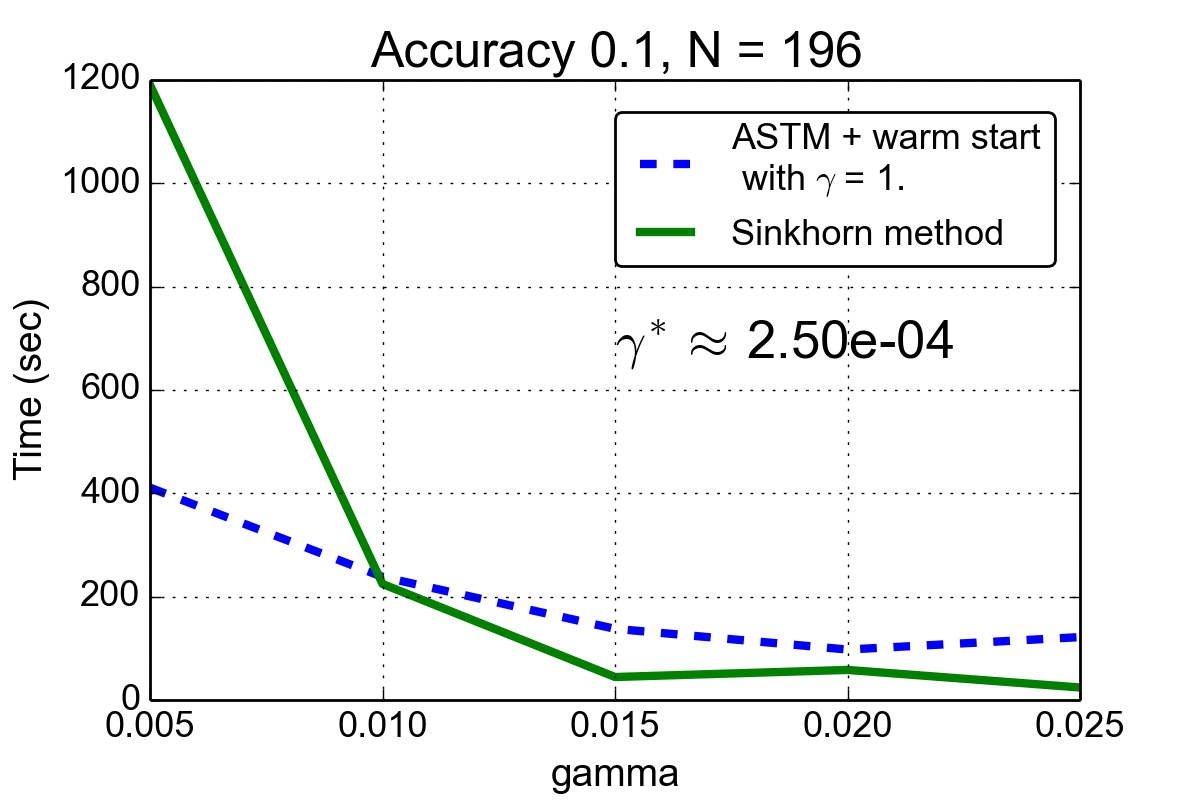}
	\end{subfigure}

	\begin{subfigure}[b]{0.4\textwidth}
		\includegraphics[width=\textwidth]{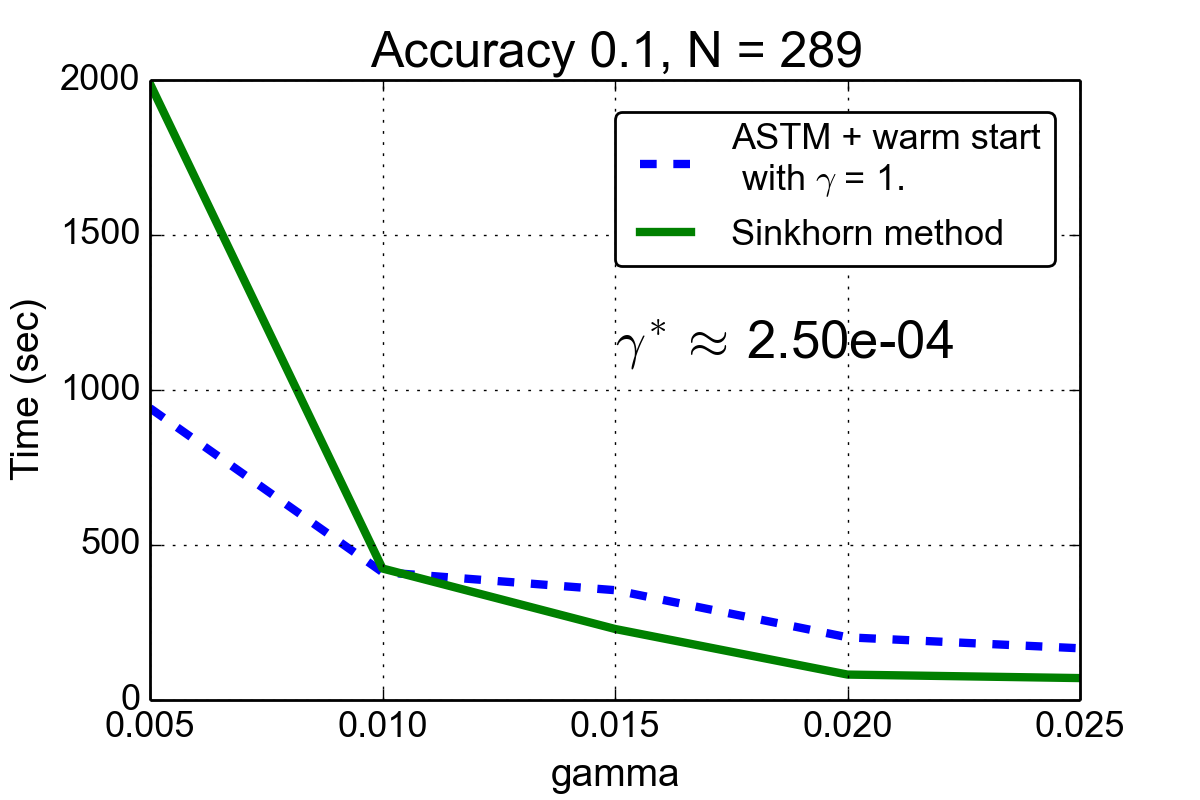}
	\end{subfigure}
	\begin{subfigure}[b]{0.4\textwidth}
		\includegraphics[width=\textwidth]{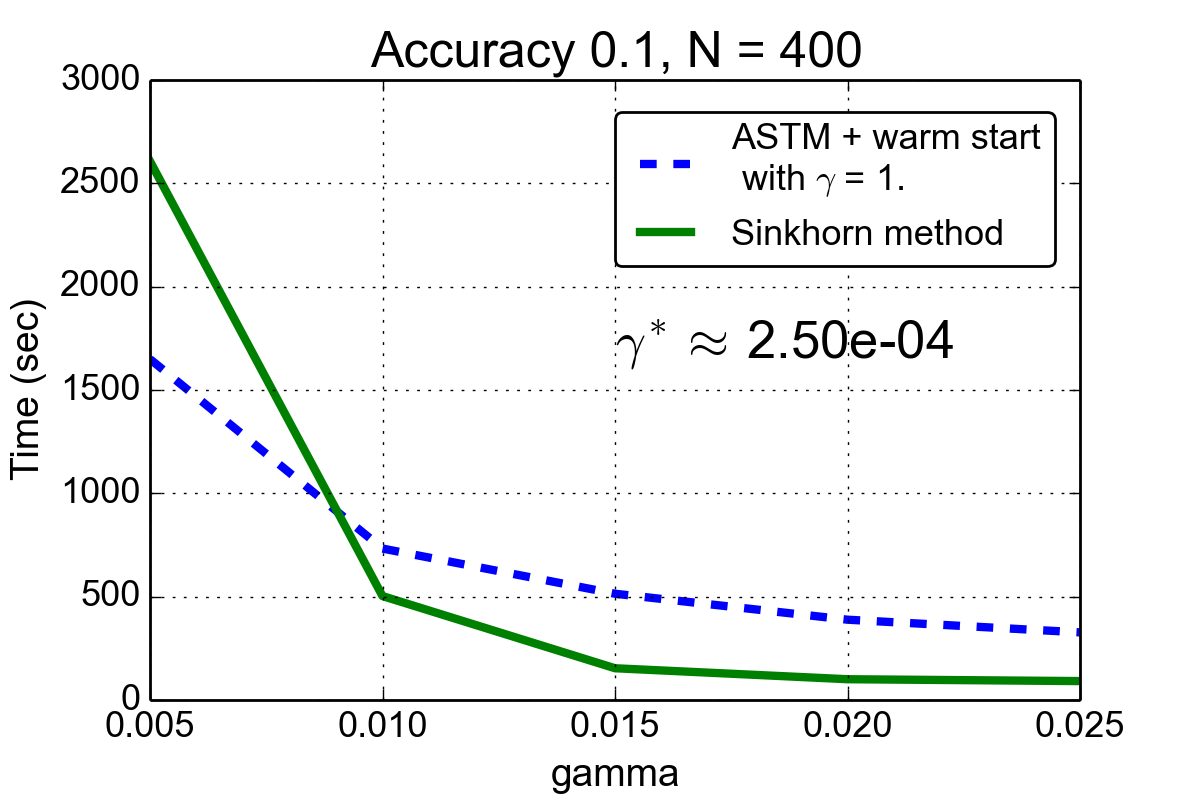}
	\end{subfigure}
	\caption{The perfomance of PDASTM with warm start vs Sinkhorn's method, Accuracy 0.1, Euclidean cost matrix $C$.}
	\label{fig:varwn_astm_01}
\end{figure}

\begin{figure}[H]	
	\centering
	\begin{subfigure}[b]{0.4\textwidth}
		\includegraphics[width=\textwidth]{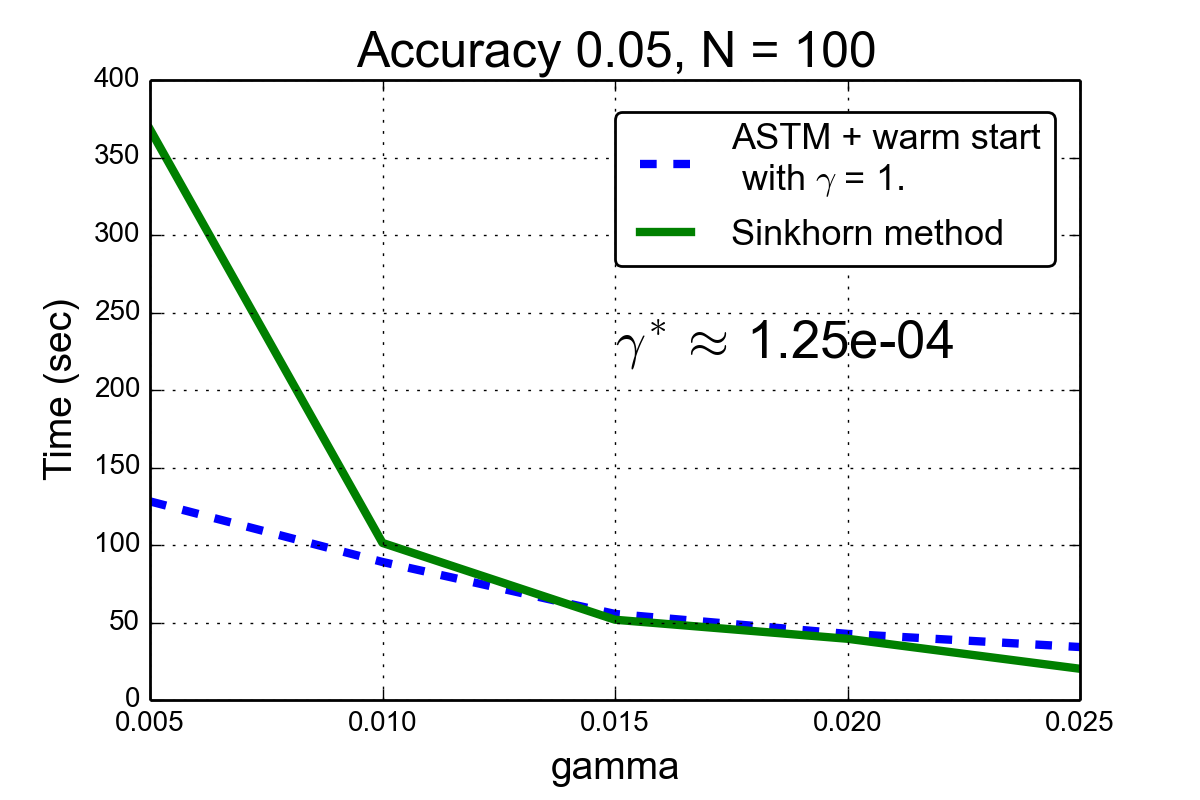}
	\end{subfigure}
	\begin{subfigure}[b]{0.4\textwidth}
		\includegraphics[width=\textwidth]{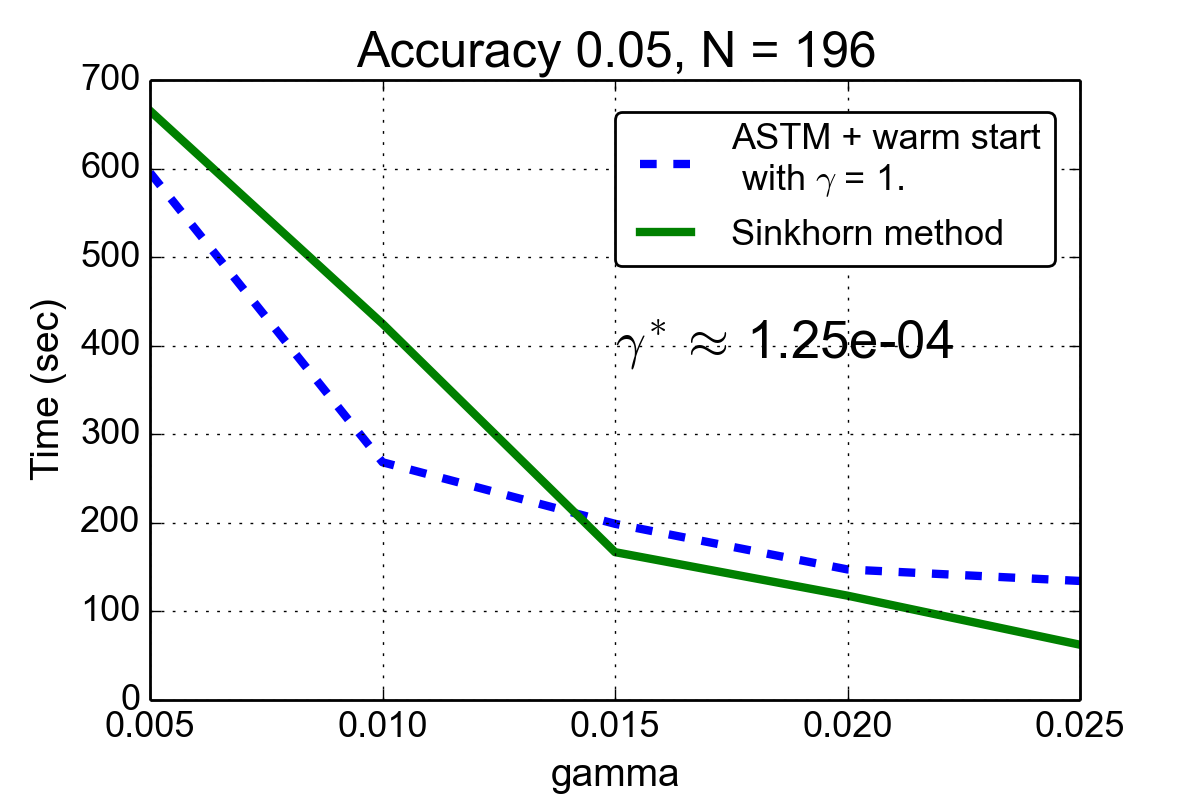}
	\end{subfigure}

	\begin{subfigure}[b]{0.4\textwidth}
		\includegraphics[width=\textwidth]{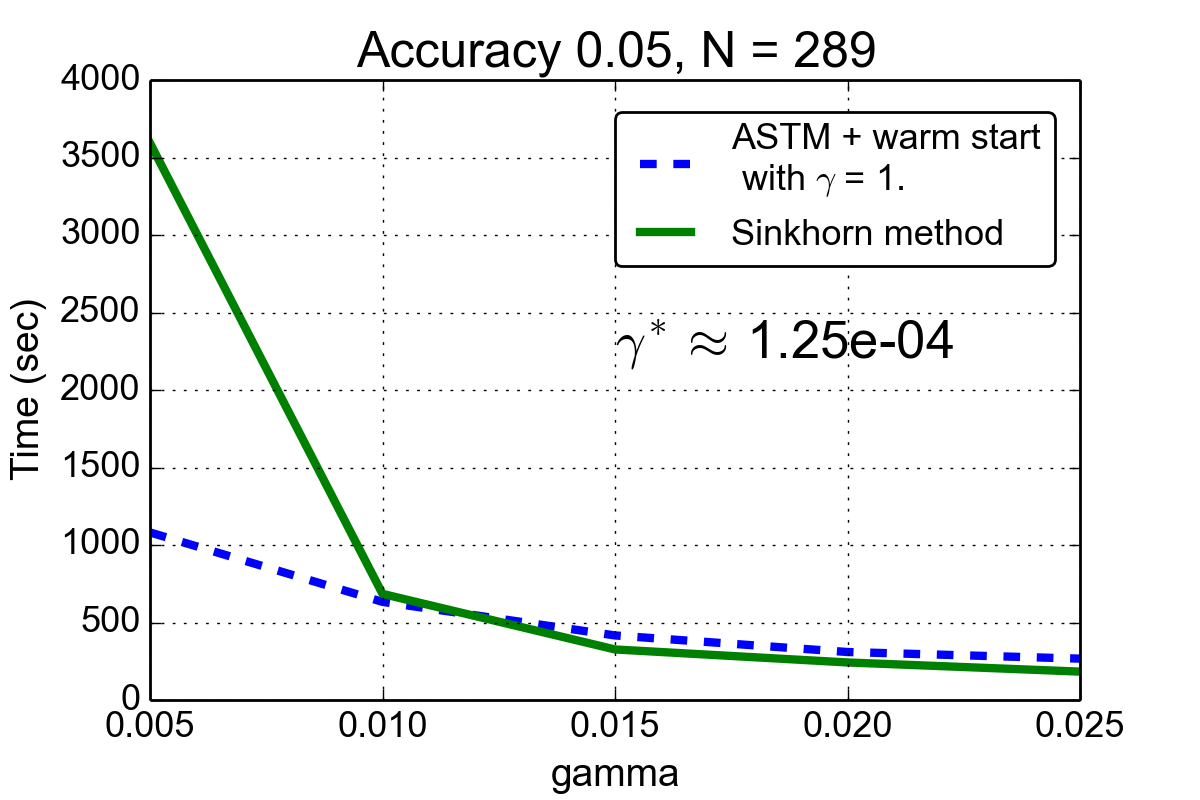}
	\end{subfigure}
	\begin{subfigure}[b]{0.4\textwidth}
		\includegraphics[width=\textwidth]{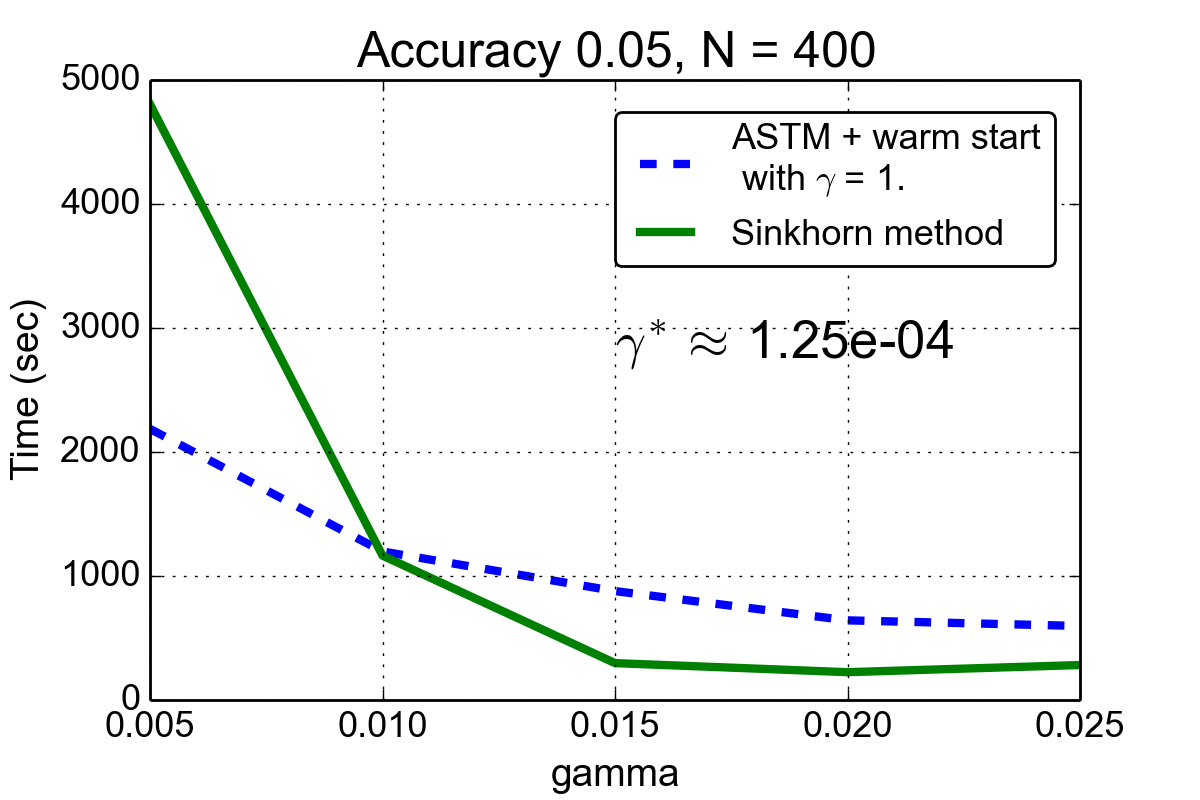}
	\end{subfigure}
	\caption{The perfomance of PDASTM with warm start vs Sinkhorn's method, Accuracy 0.05, Euclidean cost matrix $C$.}
	\label{fig:varwn_astm_005}
\end{figure}

\begin{figure}[H]
	\centering
	\begin{subfigure}[b]{0.4\textwidth}
		\includegraphics[width=\textwidth]{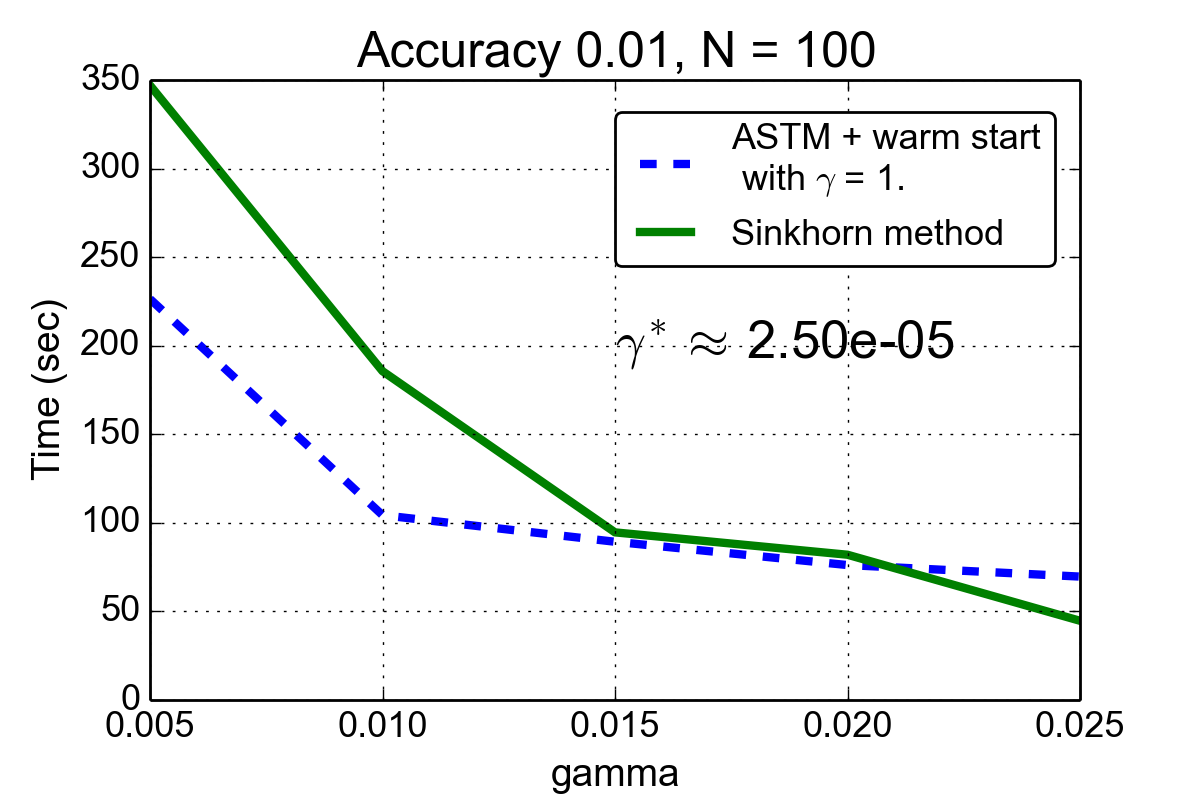}
	\end{subfigure}
	\begin{subfigure}[b]{0.4\textwidth}
		\includegraphics[width=\textwidth]{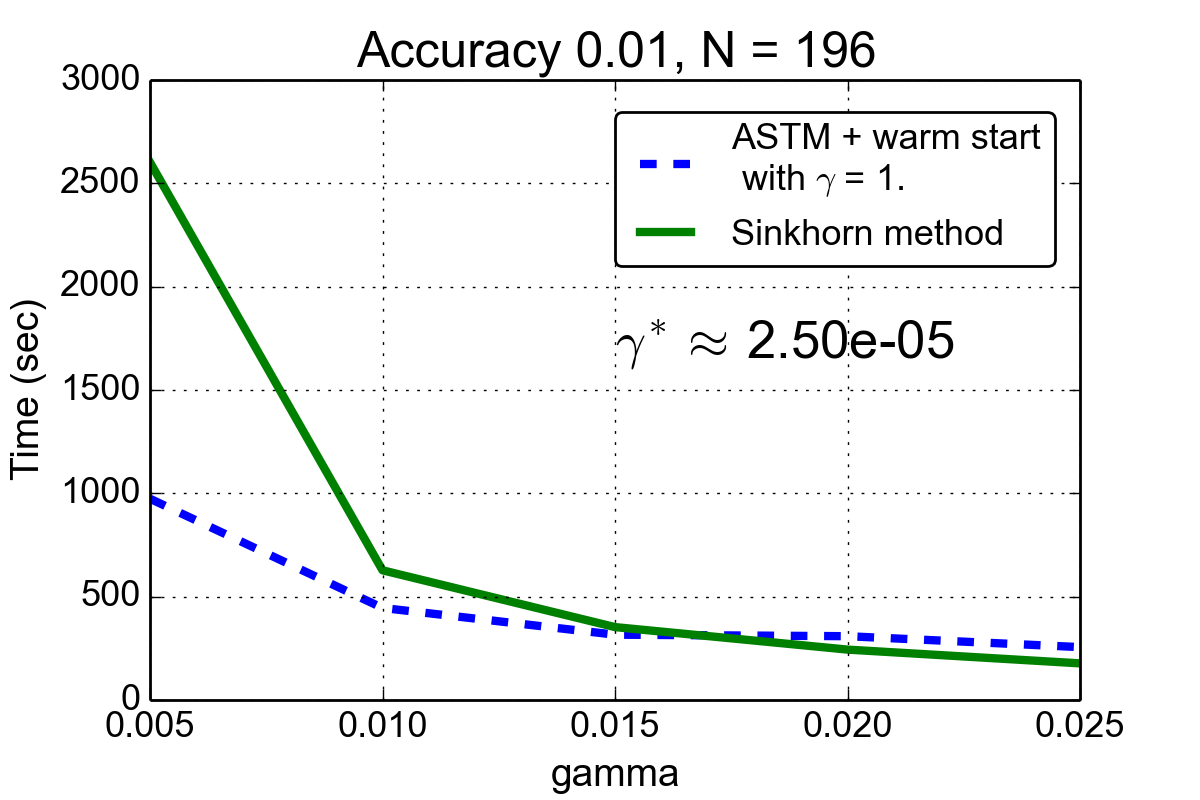}
	\end{subfigure}

	\begin{subfigure}[b]{0.4\textwidth}
		\includegraphics[width=\textwidth]{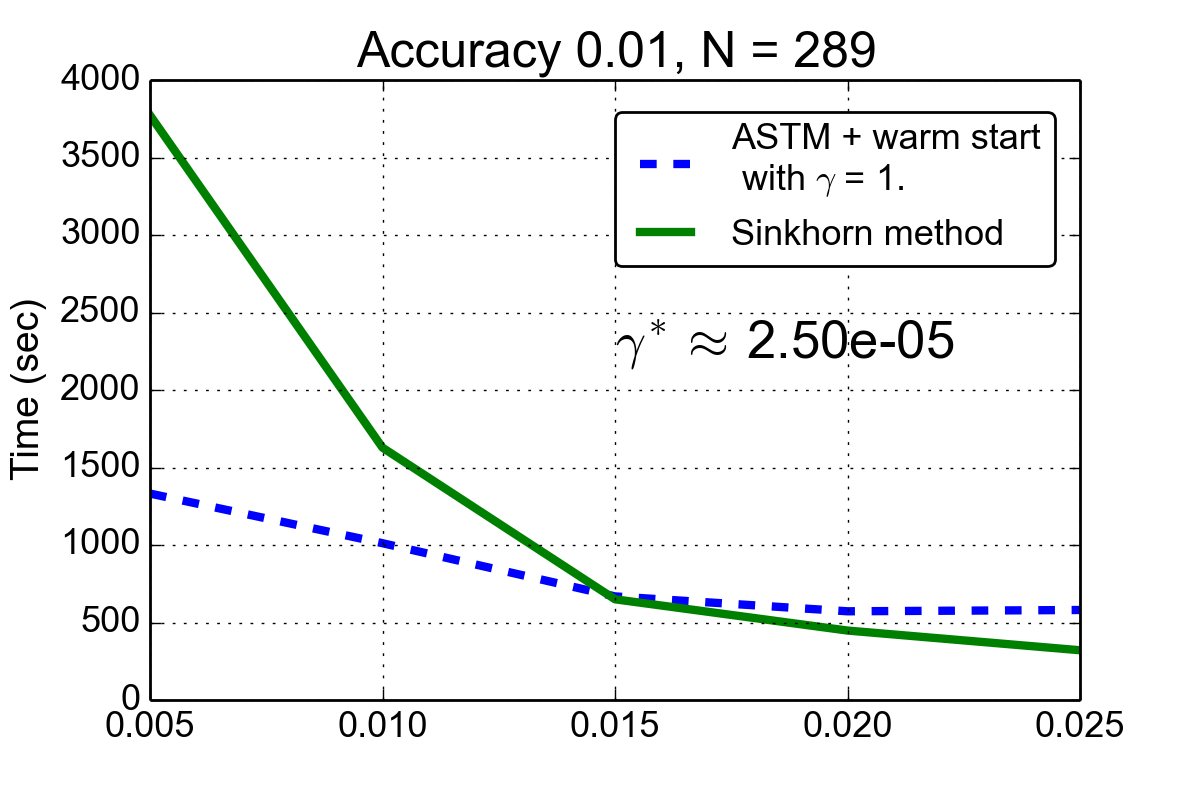}
	\end{subfigure}
	\begin{subfigure}[b]{0.4\textwidth}
		\includegraphics[width=\textwidth]{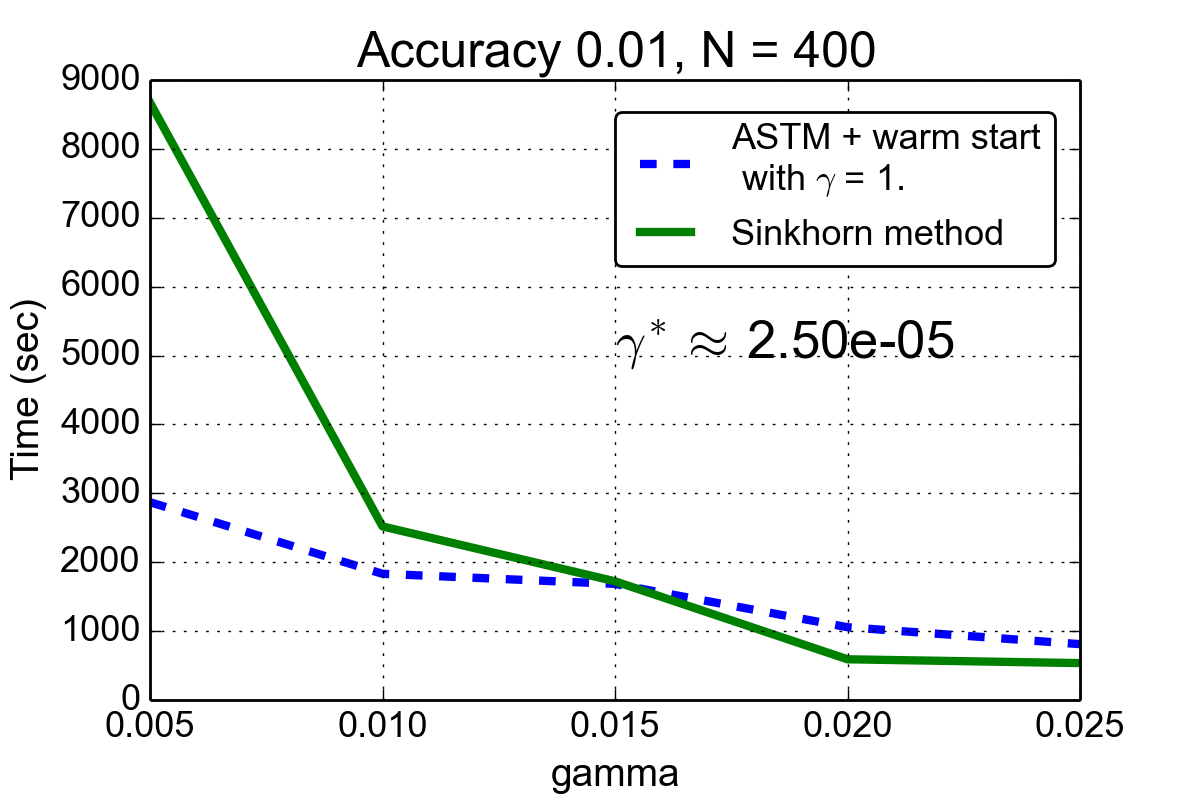}
	\end{subfigure}
	\caption{The perfomance of PDASTM with warm start vs Sinkhorn's method, Accuracy 0.01, Euclidean cost matrix $C$.}
	\label{fig:varwn_astm_001}
\end{figure}

For the Exp-Euclidean cost matrix $C$, we performed the same experiments. For the space reasons, we provide the results on the Figure \ref{fig:varwn_astm_exp_005} only for Accuracy 0.05. The results for other Accuracy values were similar.

\begin{figure}[H]
	\centering
	\begin{subfigure}[b]{0.4\textwidth}
		\includegraphics[width=\textwidth]{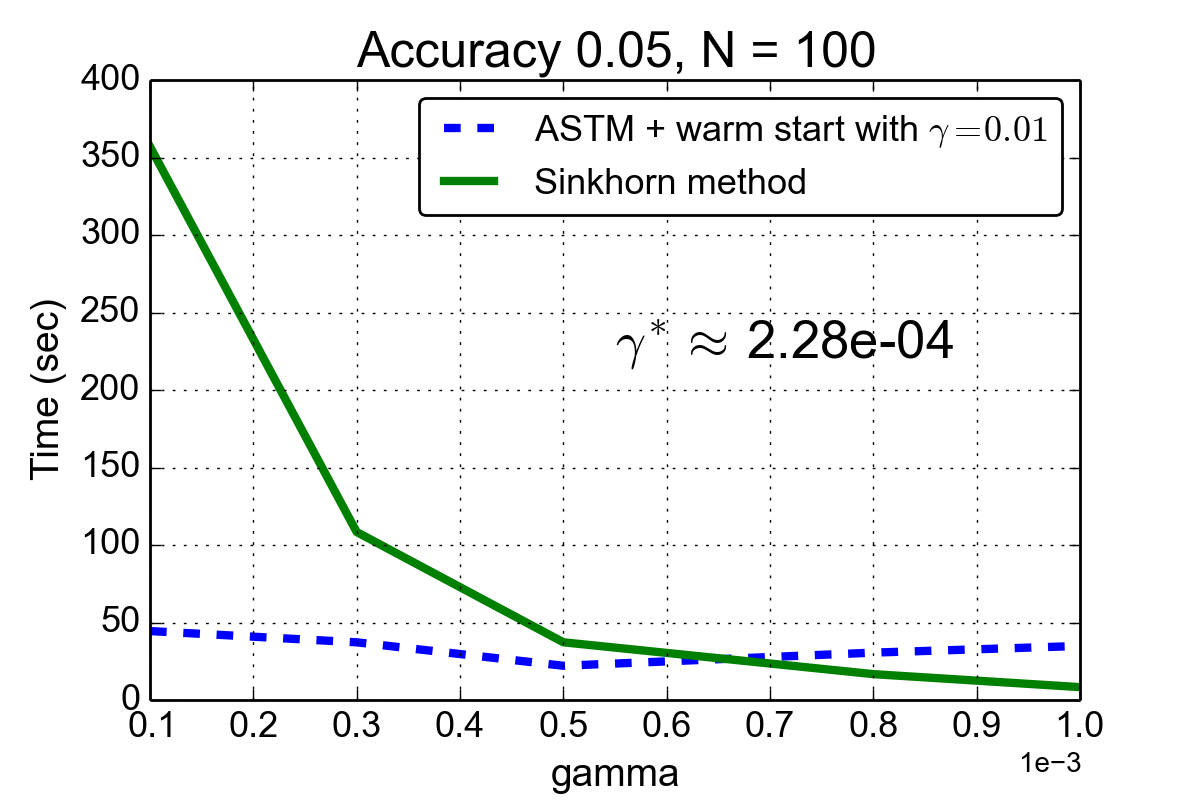}
	\end{subfigure}
	\begin{subfigure}[b]{0.4\textwidth}
		\includegraphics[width=\textwidth]{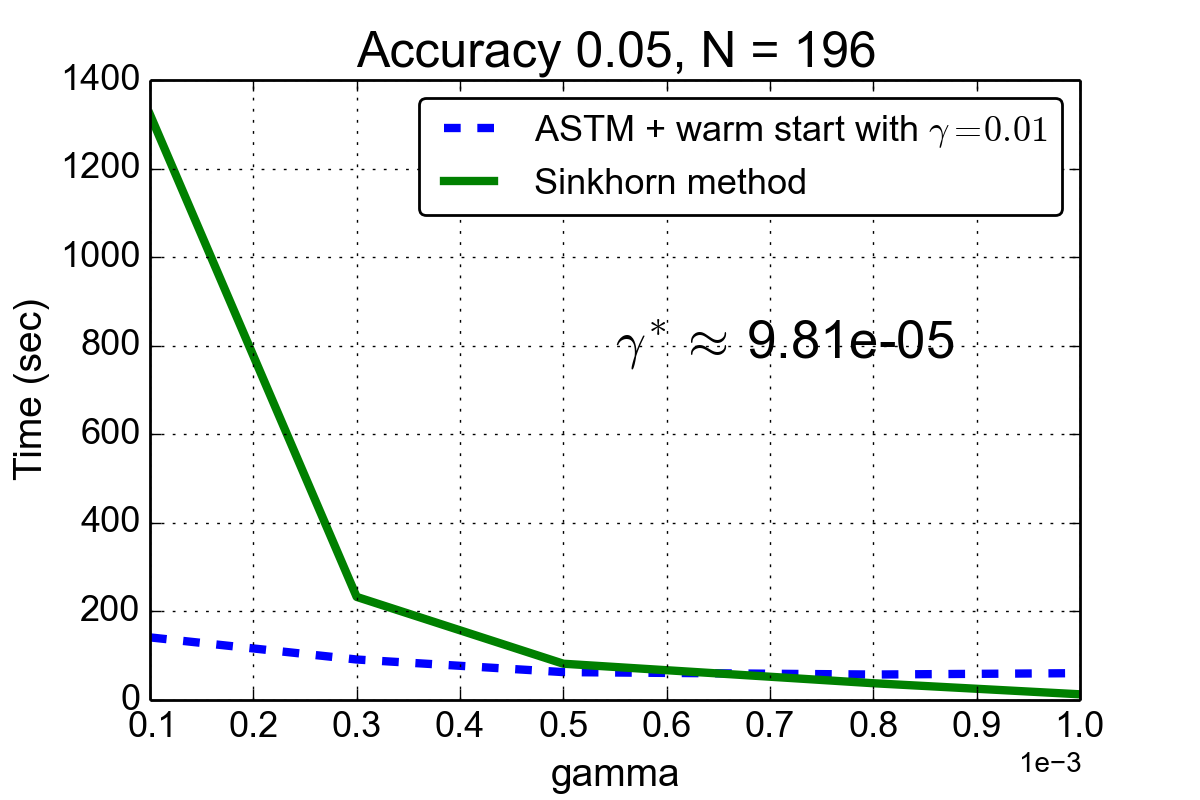}
	\end{subfigure}

	\begin{subfigure}[b]{0.4\textwidth}
		\includegraphics[width=\textwidth]{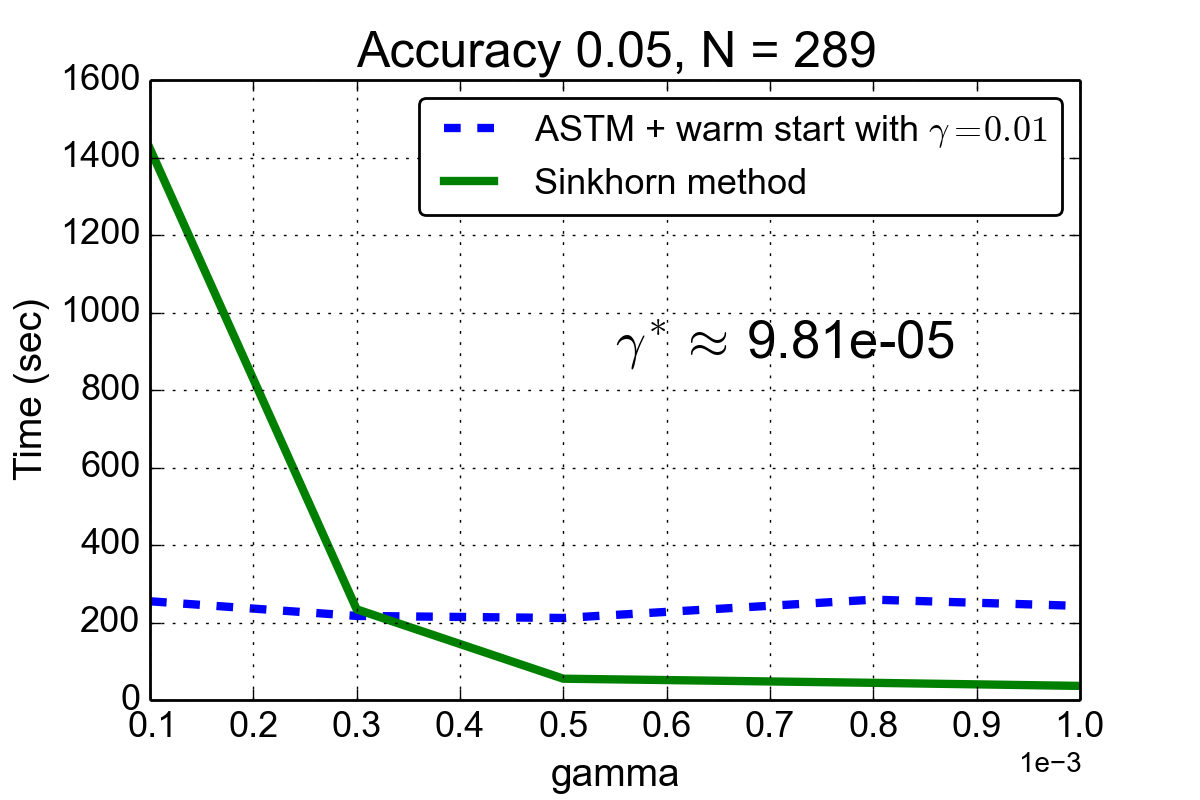}
	\end{subfigure}
	\begin{subfigure}[b]{0.4\textwidth}
		\includegraphics[width=\textwidth]{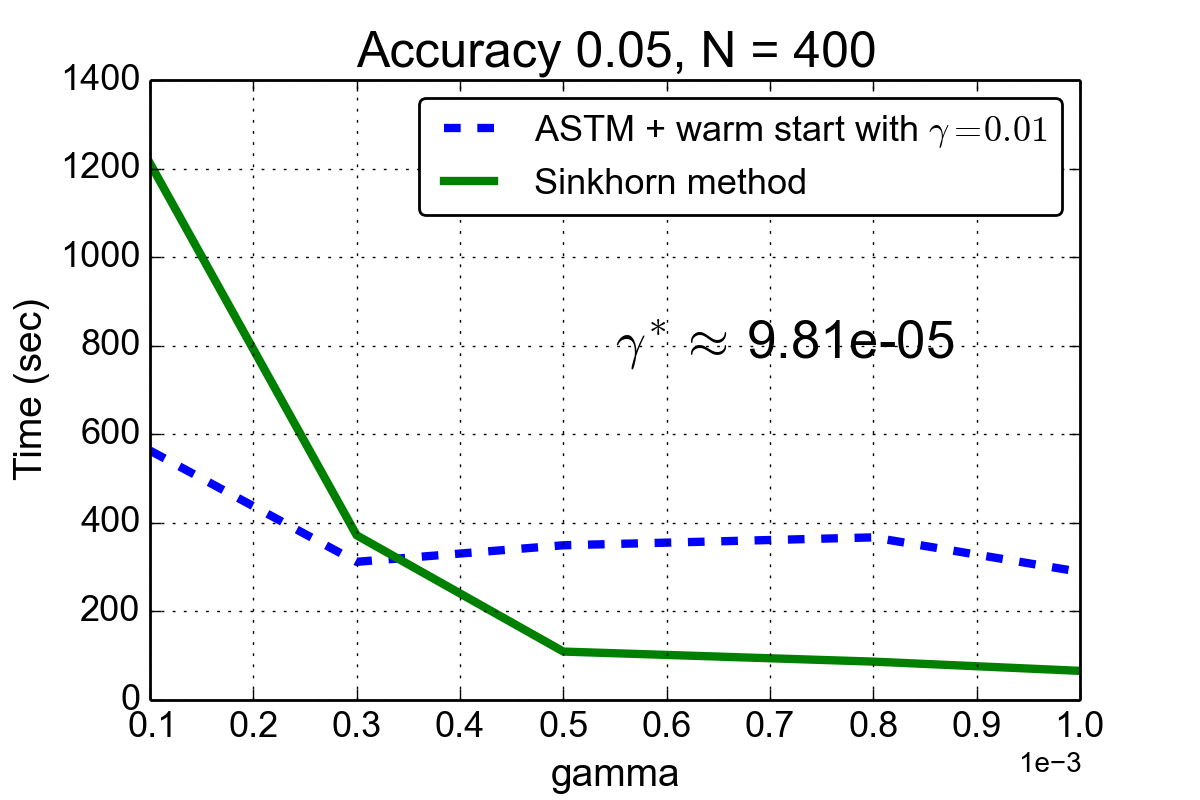}
	\end{subfigure}
	\caption{The perfomance of PDASTM with warm start vs Sinkhorn's method, Accuracy 0.05, Exp-Euclidean cost matrix $C$.}
	\label{fig:varwn_astm_exp_005}
\end{figure}

In another series of experiments we compare the performance of PDASTM with warm start and Sinkhorn's method on the problem with images from MNIST dataset and Euclidean cost matrix $C$. We run both algorithms for the same set of $\gamma$ values for 5 pairs of images. The results are aggregated by $\gamma$ and the performance is averaged for each $\gamma$. We take three values of Accuracy, $\{0.01, 0.05, 0.1\}$. The results are shown on the Figure \ref{fig:varwn_astm_MNIST}.

\begin{figure}[H]	
	\centering
	\begin{subfigure}[b]{0.4\textwidth}
		\includegraphics[width=\textwidth]{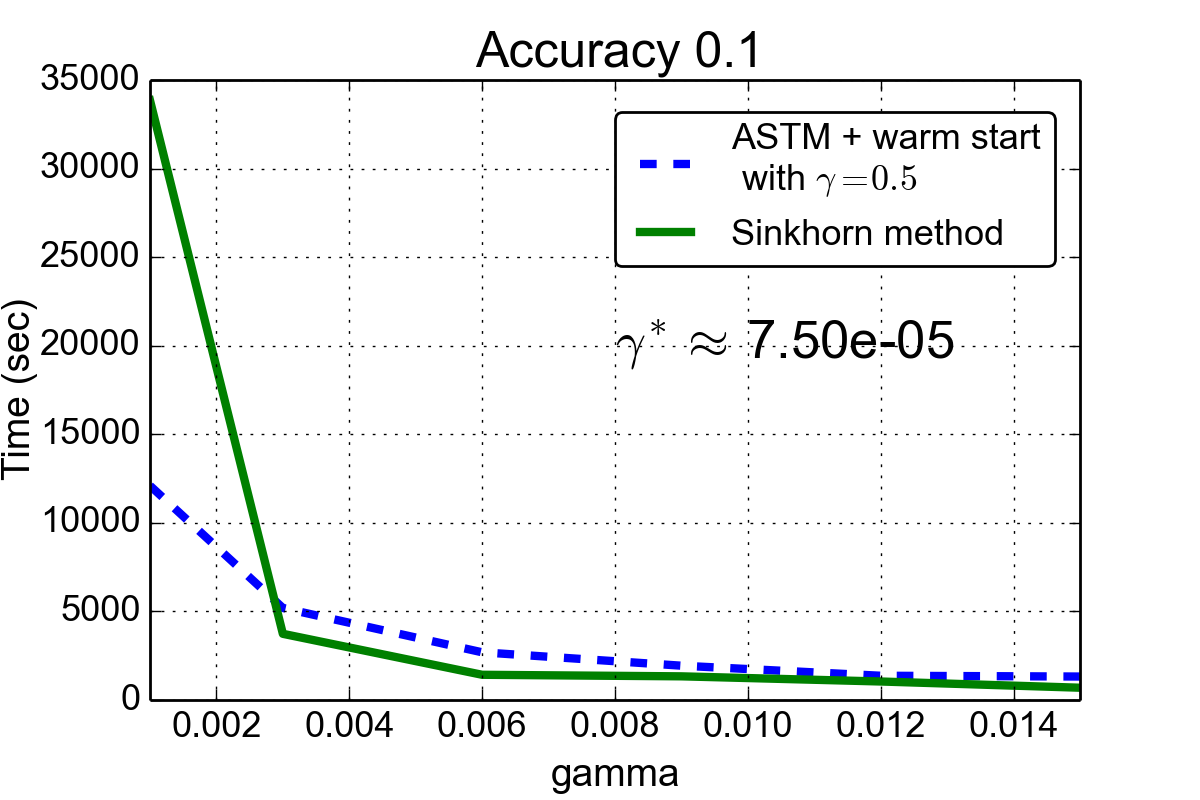}
	\end{subfigure}
	\begin{subfigure}[b]{0.4\textwidth}
		\includegraphics[width=\textwidth]{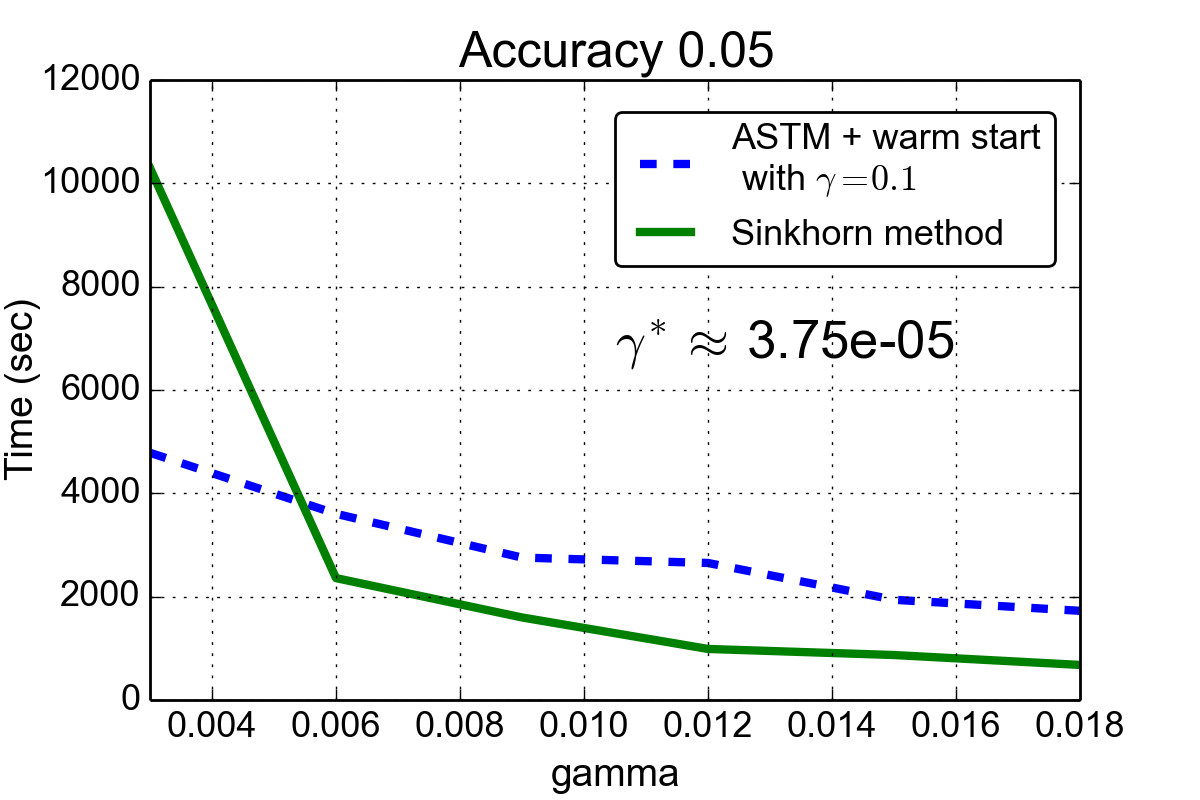}
	\end{subfigure}

	\centering
	\begin{subfigure}[b]{0.4\textwidth}
		\includegraphics[width=\textwidth]{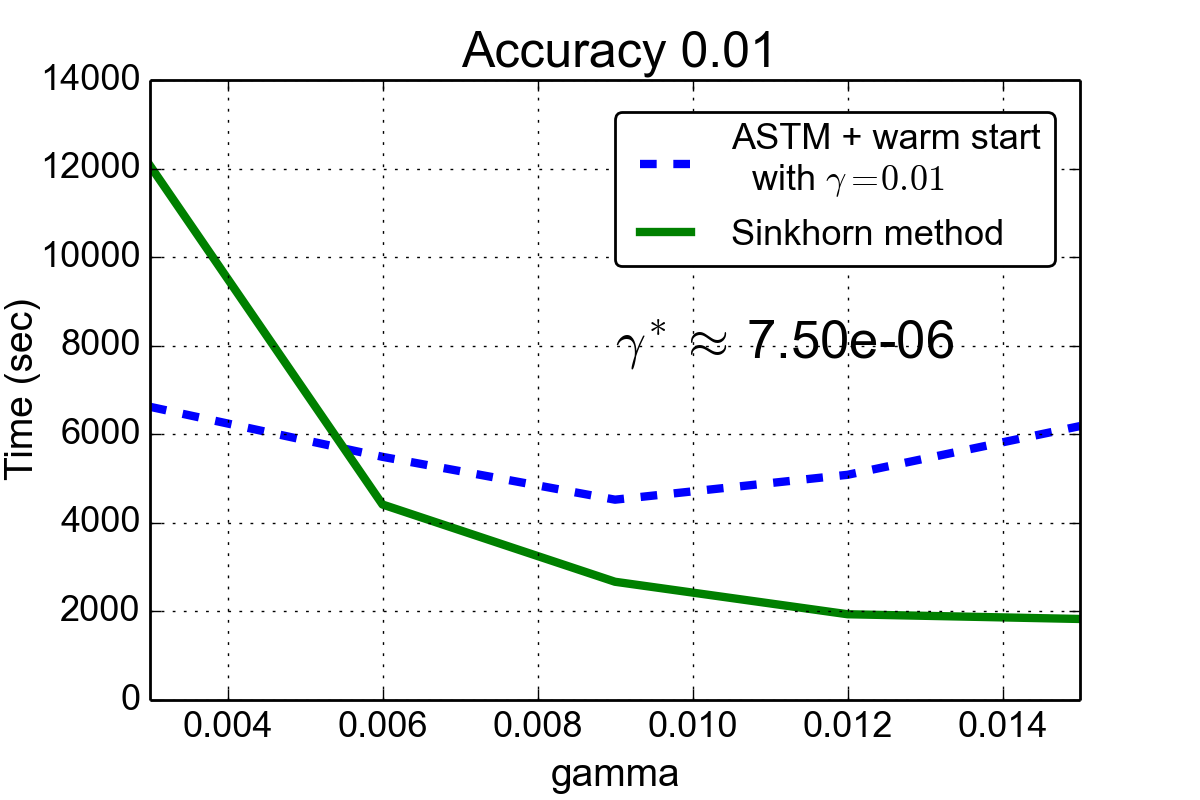}
	\end{subfigure}
	\caption{The perfomance of PDASTM with warm start vs Sinkhorn's method, Euclidean cost matrix $C$, MNIST dataset.}
	\label{fig:varwn_astm_MNIST}
\end{figure}

As we can see on all graphs, for small values of $\gamma$, namely, smaller than some threshold $\gamma_0$, our method outperforms the state-of-the-art Sinkhorn's method. 
Note that, from \cite{nesterov2005smooth}, it follows that, for very small values of $\gamma$, less than some threshold $\gamma*=\frac{\varepsilon}{4 \ln p}$, a good approximation of the solution to the problem \eqref{eq:ROT} can be obtained by solution of the linear programming problem corresponding to $\gamma=0$. We point these thresholds $\gamma*$ on the figures above.
It should be noted that the threshold $\gamma_0$ is larger than $\gamma*$. This means that it is better to use our method, but not some method for linear programing problems.

Finally, we investigate the dependence of running time of PDASTM with warm start on the problem dimension $p$. As we can see from the Figure \ref{fig:dim_dep}, the dependence is close to quadratic, which was expected from the theoretical bounds. Also this dependence is close to that of the Sinkhorn's method.

\begin{figure}[H]	
	\centering
	\includegraphics[width=0.4\textwidth]{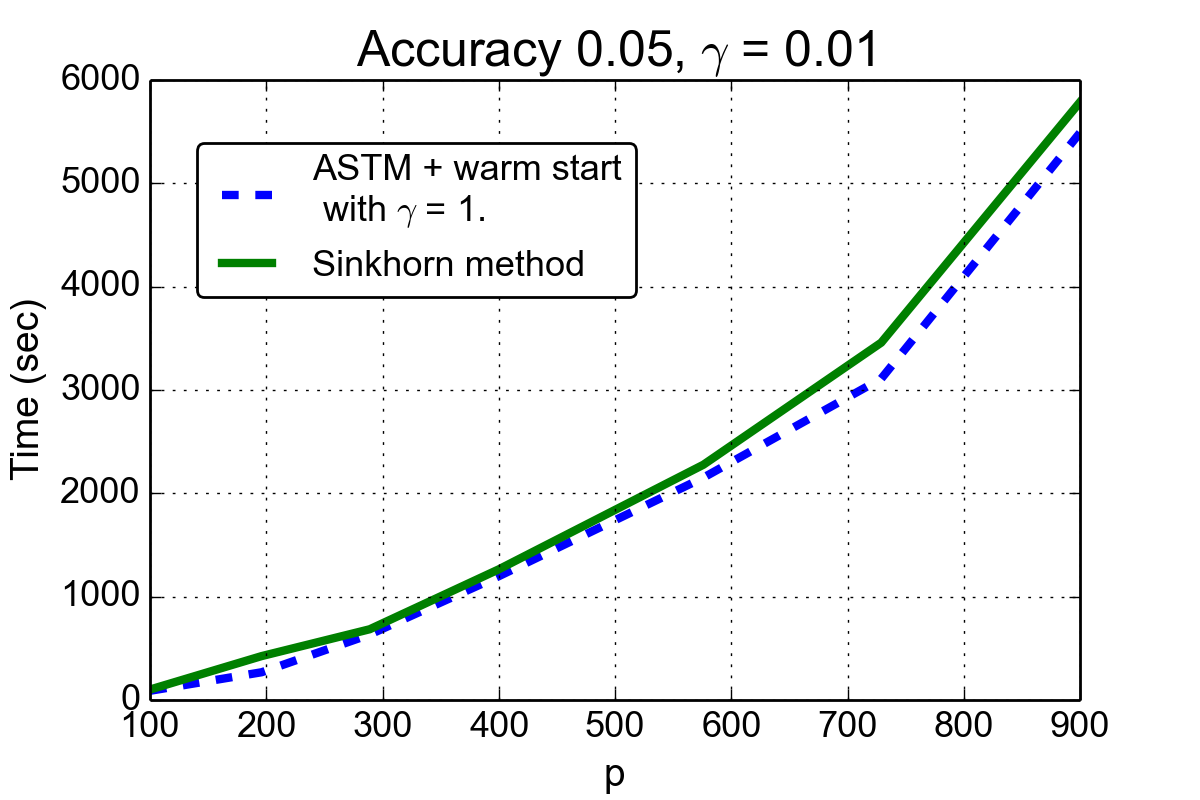}
	\caption{Dependence of running time from the problem dimension $p$.}
	\label{fig:dim_dep}
\end{figure}

\section*{Conclusion}
In this article, we propose a new adaptive accelerated gradient method for convex optimization problems and prove its convergence rate. We apply this method to a class of linearly constrained problems and show, how an approximate solution can be reconstructed. In the experiments, we consider two particular applied problems, namely, regularized optimal transport problem and traffic matrix estimation problem. The results of the experiments show that, in the regime of small regularization parameter, our algorithm outperforms the state-of-the-art Sinkhorn's-method-based approach.

\textbf{Acknowledgments.} The authors are very grateful to Yu. Nesterov for fruitful discussions. The work by A.Gasnikov was partially supported by RFBR grant 15-31-70001-mol\_a\_mos.

\bibliographystyle{plainnat}
\bibliography{references}

\begin{thebibliography}{40}
\providecommand{\natexlab}[1]{#1}
\providecommand{\url}[1]{\texttt{#1}}
\expandafter\ifx\csname urlstyle\endcsname\relax
  \providecommand{\doi}[1]{doi: #1}\else
  \providecommand{\doi}{doi: \begingroup \urlstyle{rm}\Url}\fi

\bibitem[Allen-Zhu et~al.(2017)Allen-Zhu, Li, Oliveira, and
  Wigderson]{allen2017much}
Zeyuan Allen-Zhu, Yuanzhi Li, Rafael Oliveira, and Avi Wigderson.
\newblock Much faster algorithms for matrix scaling.
\newblock \emph{arXiv preprint arXiv:1704.02315}, 2017.

\bibitem[Altschuler et~al.(2017)Altschuler, Weed, and
  Rigollet]{altschuler2017near-linear}
Jason Altschuler, Jonathan Weed, and Philippe Rigollet.
\newblock Near-linear time approximation algorithms for optimal transport via
  sinkhorn iteration.
\newblock \emph{arXiv preprint arXiv:1705.09634}, 2017.

\bibitem[Beck and Teboulle(2014)]{beck2014fast}
Amir Beck and Marc Teboulle.
\newblock A fast dual proximal gradient algorithm for convex minimization and
  applications.
\newblock \emph{Operations Research Letters}, 42\penalty0 (1):\penalty0 1 -- 6,
  2014.
\newblock ISSN 0167-6377.
\newblock \doi{http://dx.doi.org/10.1016/j.orl.2013.10.007}.
\newblock URL
  \url{http://www.sciencedirect.com/science/article/pii/S0167637713001454}.

\bibitem[Ben-Tal and Nemirovski(2015)]{ben-tal2015lectures}
Aaron Ben-Tal and Arkadi Nemirovski.
\newblock \emph{Lectures on Modern Convex Optimization (Lecture Notes)}.
\newblock Personal web-page of A. Nemirovski, 2015.
\newblock URL \url{http://www2.isye.gatech.edu/~nemirovs/Lect_ModConvOpt.pdf}.

\bibitem[Benamou et~al.(2015)Benamou, Carlier, Cuturi, Nenna, and
  Peyré]{benamou2015iterative}
Jean-David Benamou, Guillaume Carlier, Marco Cuturi, Luca Nenna, and Gabriel
  Peyré.
\newblock Iterative bregman projections for regularized transportation
  problems.
\newblock \emph{SIAM Journal on Scientific Computing}, 37\penalty0
  (2):\penalty0 A1111--A1138, 2015.

\bibitem[Boyd et~al.(2011)Boyd, Parikh, Chu, Peleato, and
  Eckstein]{boyd2011distributed}
Stephen Boyd, Neal Parikh, Eric Chu, Borja Peleato, and Jonathan Eckstein.
\newblock Distributed optimization and statistical learning via the alternating
  direction method of multipliers.
\newblock \emph{Found. Trends Mach. Learn.}, 3\penalty0 (1):\penalty0 1--122,
  January 2011.
\newblock ISSN 1935-8237.
\newblock \doi{10.1561/2200000016}.
\newblock URL \url{http://dx.doi.org/10.1561/2200000016}.

\bibitem[Bregman(1967)]{bregman1967proof}
L.M. Bregman.
\newblock Proof of the convergence of sheleikhovskii's method for a problem
  with transportation constraints.
\newblock \emph{USSR Computational Mathematics and Mathematical Physics},
  7\penalty0 (1):\penalty0 191 -- 204, 1967.
\newblock ISSN 0041-5553.
\newblock \doi{http://dx.doi.org/10.1016/0041-5553(67)90069-9}.
\newblock URL
  \url{http://www.sciencedirect.com/science/article/pii/0041555367900699}.

\bibitem[Chambolle and Pock(2011)]{chambolle2011first-order}
Antonin Chambolle and Thomas Pock.
\newblock A first-order primal-dual algorithm for convex problems with
  applications to imaging.
\newblock \emph{Journal of Mathematical Imaging and Vision}, 40\penalty0
  (1):\penalty0 120--145, 2011.
\newblock ISSN 1573-7683.
\newblock \doi{10.1007/s10851-010-0251-1}.
\newblock URL \url{http://dx.doi.org/10.1007/s10851-010-0251-1}.

\bibitem[Chernov et~al.(2016)Chernov, Dvurechensky, and
  Gasnikov]{chernov2016fast}
Alexey Chernov, Pavel Dvurechensky, and Alexander Gasnikov.
\newblock \emph{Fast Primal-Dual Gradient Method for Strongly Convex
  Minimization Problems with Linear Constraints}, pages 391--403.
\newblock Springer International Publishing, Cham, 2016.
\newblock ISBN 978-3-319-44914-2.
\newblock \doi{10.1007/978-3-319-44914-2_31}.
\newblock URL \url{http://dx.doi.org/10.1007/978-3-319-44914-2_31}.

\bibitem[Chizat et~al.(2016)Chizat, Peyré, Schmitzer, and
  Vialard]{chizat2016scaling}
Lenaic Chizat, Gabriel Peyré, Bernhard Schmitzer, and François-Xavier
  Vialard.
\newblock Scaling algorithms for unbalanced transport problems.
\newblock \emph{arXiv preprint arXiv:1607.05816}, 2016.

\bibitem[Cuturi(2013)]{cuturi2013sinkhorn}
Marco Cuturi.
\newblock Sinkhorn distances: Lightspeed computation of optimal transport.
\newblock In C.~J.~C. Burges, L.~Bottou, M.~Welling, Z.~Ghahramani, and K.~Q.
  Weinberger, editors, \emph{Advances in Neural Information Processing Systems
  26}, pages 2292--2300. Curran Associates, Inc., 2013.
\newblock URL
  \url{http://papers.nips.cc/paper/4927-sinkhorn-distances-lightspeed-computation-of-optimal-transport.pdf}.

\bibitem[Cuturi and Peyré(2016)]{cuturi2016smoothed}
Marco Cuturi and Gabriel Peyré.
\newblock A smoothed dual approach for variational wasserstein problems.
\newblock \emph{SIAM Journal on Imaging Sciences}, 9\penalty0 (1):\penalty0
  320--343, 2016.
\newblock \doi{10.1137/15M1032600}.

\bibitem[D\"{u}nner et~al.(2016)D\"{u}nner, Forte, Tak\'{a}\v{c}, and
  Jaggi]{dunner2016primal-dual}
Celestine D\"{u}nner, Simone Forte, Martin Tak\'{a}\v{c}, and Martin Jaggi.
\newblock Primal-dual rates and certificates.
\newblock In \emph{Proceedings of the 33rd International Conference on
  International Conference on Machine Learning - Volume 48}, ICML'16, pages
  783--792. JMLR.org, 2016.
\newblock URL \url{http://dl.acm.org/citation.cfm?id=3045390.3045474}.

\bibitem[Fang et~al.(1997)Fang, J., and H.-S.]{fang1997entropy}
S.-C. Fang, Rajasekera J., and Tsao H.-S.
\newblock \emph{Entropy optimization and mathematical programming}.
\newblock Kluwer’s International Series, 1997.

\bibitem[Franklin and Lorenz(1989)]{franklin1989scaling}
Joel Franklin and Jens Lorenz.
\newblock On the scaling of multidimensional matrices.
\newblock \emph{Linear Algebra and its Applications}, 114:\penalty0 717 -- 735,
  1989.
\newblock ISSN 0024-3795.
\newblock \doi{http://dx.doi.org/10.1016/0024-3795(89)90490-4}.
\newblock URL
  \url{http://www.sciencedirect.com/science/article/pii/0024379589904904}.
\newblock Special Issue Dedicated to Alan J. Hoffman.

\bibitem[Gasnikov et~al.(2016{\natexlab{a}})Gasnikov, Gasnikova, Nesterov, and
  Chernov]{gasnikov2016efficient}
A.~V. Gasnikov, E.~B. Gasnikova, Yu.~E. Nesterov, and A.~V. Chernov.
\newblock Efficient numerical methods for entropy-linear programming problems.
\newblock \emph{Computational Mathematics and Mathematical Physics},
  56\penalty0 (4):\penalty0 514--524, 2016{\natexlab{a}}.
\newblock ISSN 1555-6662.
\newblock \doi{10.1134/S0965542516040084}.
\newblock URL \url{http://dx.doi.org/10.1134/S0965542516040084}.

\bibitem[Gasnikov and Nesterov(2016)]{gasnikov2016universal}
Alexander Gasnikov and Yurii Nesterov.
\newblock Universal fast gradient method for stochastic composit optimization
  problems.
\newblock \emph{arXiv preprint arXiv:1604.05275}, 2016.

\bibitem[Gasnikov et~al.(2016{\natexlab{b}})Gasnikov, Gasnikova, Mendel, and
  Chepurchenko]{gasnikov2016evolution}
Alexander Gasnikov, Evgenia Gasnikova, Michail Mendel, and Ksenia Chepurchenko.
\newblock Evolutionary derivations of entropy model for traffic demand matrix
  calculation.
\newblock \emph{Matematicheskoe Modelirovanie}, 28\penalty0 (4):\penalty0
  111--124, 2016{\natexlab{b}}.
\newblock In Russian.

\bibitem[Golan et~al.(1996)Golan, Judge, and Miller]{golan1996maximum}
A.~Golan, G.~Judge, and D.~Miller.
\newblock \emph{Maximum entropy econometrics: Robust estimation with limited
  data.}
\newblock Chichester, Wiley, 1996.

\bibitem[Goldstein et~al.(2014)Goldstein, O'Donoghue, Setzer, and
  Baraniuk]{goldstein2014fast}
Tom Goldstein, Brendan O'Donoghue, Simon Setzer, and Richard Baraniuk.
\newblock Fast alternating direction optimization methods.
\newblock \emph{SIAM Journal on Imaging Sciences}, 7\penalty0 (3):\penalty0
  1588--1623, 2014.
\newblock \doi{10.1137/120896219}.
\newblock URL \url{https://doi.org/10.1137/120896219}.

\bibitem[Hastie et~al.(2001)Hastie, Tibshirani, and
  Friedman]{hastie2001elements}
Trevor Hastie, Robert Tibshirani, and Jerome Friedman.
\newblock \emph{The Elements of Statistical Learning}.
\newblock Springer Series in Statistics. Springer New York Inc., New York, NY,
  USA, 2001.

\bibitem[Kalantari and Khachiyan(1993)]{kalantari1993rate}
Bahman Kalantari and Leonid Khachiyan.
\newblock On the rate of convergence of deterministic and randomized ras matrix
  scaling algorithms.
\newblock \emph{Oper. Res. Lett.}, 14\penalty0 (5):\penalty0 237--244, December
  1993.
\newblock ISSN 0167-6377.
\newblock \doi{10.1016/0167-6377(93)90087-W}.
\newblock URL \url{http://dx.doi.org/10.1016/0167-6377(93)90087-W}.

\bibitem[Kantorovich(1942)]{kantorovich1942translocation}
Leonid Kantorovich.
\newblock On the translocation of masses.
\newblock \emph{(Doklady) Acad. Sci. URSS (N.S.)}, 37:\penalty0 199--201, 1942.

\bibitem[Kapur(1989)]{kapur1989maximum}
J.~Kapur.
\newblock \emph{Maximum – entropy models in science and engineering}.
\newblock John Wiley \& Sons, Inc., 1989.

\bibitem[Li et~al.(2016)Li, Wu, Wu, Long, and Wang]{li2016inexact}
Jueyou Li, Zhiyou Wu, Changzhi Wu, Qiang Long, and Xiangyu Wang.
\newblock An inexact dual fast gradient-projection method for separable convex
  optimization with linear coupled constraints.
\newblock \emph{Journal of Optimization Theory and Applications}, 168\penalty0
  (1):\penalty0 153--171, 2016.
\newblock ISSN 1573-2878.
\newblock \doi{10.1007/s10957-015-0757-1}.
\newblock URL \url{http://dx.doi.org/10.1007/s10957-015-0757-1}.

\bibitem[Malitsky and Pock(2016)]{malitsky2016first-order}
Yura Malitsky and Thomas Pock.
\newblock A first-order primal-dual algorithm with linesearch.
\newblock \emph{ArXiV preprint}, 2016.
\newblock URL \url{https://arxiv.org/abs/1608.08883}.

\bibitem[Nesterov(2004)]{nesterov2004introduction}
Yurii Nesterov.
\newblock \emph{Introductory Lectures on Convex Optimization: a basic course}.
\newblock Kluwer Academic Publishers, Massachusetts, 2004.

\bibitem[Nesterov(2005)]{nesterov2005smooth}
Yurii Nesterov.
\newblock Smooth minimization of non-smooth functions.
\newblock \emph{Mathematical Programming}, 103\penalty0 (1):\penalty0 127--152,
  2005.
\newblock ISSN 1436-4646.
\newblock \doi{10.1007/s10107-004-0552-5}.
\newblock URL \url{http://dx.doi.org/10.1007/s10107-004-0552-5}.

\bibitem[Nesterov and Polyak(2006)]{nesterov2006cubic}
Yurii Nesterov and Boris Polyak.
\newblock Cubic regularization of newton method and its global performance.
\newblock \emph{Mathematical Programming}, 108\penalty0 (1):\penalty0 177--205,
  2006.
\newblock ISSN 1436-4646.
\newblock \doi{10.1007/s10107-006-0706-8}.
\newblock URL \url{http://dx.doi.org/10.1007/s10107-006-0706-8}.

\bibitem[Ouyang et~al.(2015)Ouyang, Chen, Lan, and
  Eduardo~Pasiliao]{ouyang2015accelerated}
Yuyuan Ouyang, Yunmei Chen, Guanghui Lan, and Jr. Eduardo~Pasiliao.
\newblock An accelerated linearized alternating direction method of
  multipliers.
\newblock \emph{SIAM Journal on Imaging Sciences}, 8\penalty0 (1):\penalty0
  644--681, 2015.
\newblock \doi{10.1137/14095697X}.
\newblock URL \url{https://doi.org/10.1137/14095697X}.

\bibitem[Patrascu et~al.(2015)Patrascu, Necoara, and
  Findeisen]{patrascu2015rate}
A.~Patrascu, I.~Necoara, and R.~Findeisen.
\newblock Rate of convergence analysis of a dual fast gradient method for
  general convex optimization.
\newblock In \emph{2015 54th IEEE Conference on Decision and Control (CDC)},
  pages 3311--3316, Dec 2015.
\newblock \doi{10.1109/CDC.2015.7402717}.

\bibitem[Schmitzer(2016)]{schmitzer2016stabilized}
Bernhard Schmitzer.
\newblock Stabilized sparse scaling algorithms for entropy regularized
  transport problems.
\newblock \emph{arXiv preprint arXiv:1610.06519}, 2016.

\bibitem[Shvetsov(2003)]{shvetsov2003mathematical}
V.~I. Shvetsov.
\newblock Mathematical modeling of traffic flows.
\newblock \emph{Automation and Remote Control}, 64\penalty0 (11):\penalty0
  1651--1689, 2003.
\newblock ISSN 1608-3032.
\newblock \doi{10.1023/A:1027348026919}.
\newblock URL \url{http://dx.doi.org/10.1023/A:1027348026919}.

\bibitem[Sinkhorn(1974)]{sinkhorn1974diagonal}
Richard Sinkhorn.
\newblock Diagonal equivalence to matrices with prescribed row and column sums.
  {II}.
\newblock \emph{Proc. Amer. Math. Soc.}, 45:\penalty0 195--198, 1974.
\newblock ISSN 0002-9939.
\newblock \doi{10.2307/2040061}.
\newblock URL \url{http://dx.doi.org/10.2307/2040061}.

\bibitem[Tran-Dinh and Cevher(2014)]{tran-dinh2014constrained}
Quoc Tran-Dinh and Volkan Cevher.
\newblock Constrained convex minimization via model-based excessive gap.
\newblock In \emph{Proceedings of the 27th International Conference on Neural
  Information Processing Systems}, NIPS'14, pages 721--729, Cambridge, MA, USA,
  2014. MIT Press.
\newblock URL \url{http://dl.acm.org/citation.cfm?id=2968826.2968907}.

\bibitem[Tran-Dinh et~al.(2015)Tran-Dinh, Fercoq, and
  Cevher]{tran-dinh2015smooth}
Quoc Tran-Dinh, Olivier Fercoq, and Volkan Cevher.
\newblock A smooth primal-dual optimization framework for nonsmooth composite
  convex minimization.
\newblock \emph{arXiv preprint arXiv:1507.06243}, 2015.

\bibitem[Wilson(2011)]{wilson2011entropy}
A.G. Wilson.
\newblock \emph{Entropy in Urban and Regional Modelling}.
\newblock Monographs in spatial and environmental systems analysis. Routledge,
  2011.
\newblock ISBN 9780415695640.

\bibitem[Yurtsever et~al.(2015)Yurtsever, Tran-Dinh, and
  Cevher]{yurtsever2015universal}
Alp Yurtsever, Quoc Tran-Dinh, and Volkan Cevher.
\newblock A universal primal-dual convex optimization framework.
\newblock In \emph{Proceedings of the 28th International Conference on Neural
  Information Processing Systems}, NIPS'15, pages 3150--3158, Cambridge, MA,
  USA, 2015. MIT Press.
\newblock URL \url{http://dl.acm.org/citation.cfm?id=2969442.2969591}.

\bibitem[Zhang et~al.(2005)Zhang, Roughan, Lund, and
  Donoho]{zhang2005estimating}
Yin Zhang, Matthew Roughan, Carsten Lund, and David~L. Donoho.
\newblock Estimating point-to-point and point-to-multipoint traffic matrices:
  An information-theoretic approach.
\newblock \emph{IEEE/ACM Trans. Netw.}, 13\penalty0 (5):\penalty0 947--960,
  October 2005.
\newblock ISSN 1063-6692.
\newblock \doi{10.1109/TNET.2005.857115}.
\newblock URL \url{http://dx.doi.org/10.1109/TNET.2005.857115}.

\bibitem[Zou and Hastie(2005)]{zou2005regularization}
Hui Zou and Trevor Hastie.
\newblock Regularization and variable selection via the elastic net.
\newblock \emph{Journal of the Royal Statistical Society Series B}, 67\penalty0
  (2):\penalty0 301--320, 2005.
\newblock URL
  \url{http://EconPapers.repec.org/RePEc:bla:jorssb:v:67:y:2005:i:2:p:301-320}.

\end{thebibliography}


\end{document}